\documentclass[11pt]{amsart}
\usepackage{fullpage,verbatim,amssymb}
\usepackage{hyperref}
\usepackage[usenames,dvipsnames]{color}
\usepackage{enumitem, soul}
\usepackage{amsthm}
\theoremstyle{plain}

\newtheorem*{theorem*}{Theorem}

\makeatletter
\let\@@pmod\mod
\DeclareRobustCommand{\mod}{\@ifstar\@pmods\@@pmod}
\def\@pmods#1{\mkern4mu({\operator@font mod}\mkern 6mu#1)}
\makeatother

\definecolor{blue}{rgb}{0,0,1}
\definecolor{red}{rgb}{1,0,0}
\definecolor{green}{rgb}{0,.6,.2}
\definecolor{purple}{rgb}{1,0,1}

\long\def\red#1\endred{\textcolor{red}{#1}}
\long\def\blue#1\endblue{\textcolor{blue}{#1}}
\long\def\purple#1\endpurple{\textcolor{purple}{ #1}}
\long\def\green#1\endgreen{\textcolor{green}{#1}}

\newcommand{\sm}{\left(\begin{smallmatrix}}
\newcommand{\esm}{\end{smallmatrix}\right)}
\newcommand{\bpm}{\begin{pmatrix}}
\newcommand{\ebpm}{\end{pmatrix}}

\newcommand{\Z}{\mathbb{Z}}

\let\Re\relax
\DeclareMathOperator{\Re}{Re}

\newtheorem{thm}{Theorem}

\newtheorem{theorem}{Theorem}

\newtheorem{corollary}[theorem]{Corollary}

\theoremstyle{remark}

\newtheorem{remark}[theorem]{Remark}

\numberwithin{theorem}{section}
\numberwithin{equation}{section}

\title{Asymptotics of the spectral determinant of the weighted Laplacian for a sequence of compact Riemann surfaces of infinitely growing volume}

\author{Jay Jorgenson}
\thanks{The first named author acknowledges grant support from PSC-CUNY Awards 67415-00 55 and 68462-00 56, which are jointly funded
by the Professional Staff Congress and The City University of New York.}
\author{Lejla Smajlovi\'{c}}
\author{Polyxeni Spilioti}
\thanks{The third named author was supported by the Hellenic Foundation for Research and Innovation (H.F.R.I.) under the ``3rd Call for H.F.R.I. Research Projects to support Faculty Members $\&$ Researchers"(Project Number: 25622).}

\begin{document}
\maketitle

\begin{abstract}\noindent
Let $(X,\chi,k)$ be a triple consisting of 
a smooth, compact hyperbolic Riemann surface $X$ of genus $g$, and an $m$ dimensional unitary multiplier system $\chi$ of admissible weight $k$.
Our first result establishes an analogue of the prime geodesic theorem for the weighted prime geodesic counting function associated to
$(X,\chi,k)$.  The error term we obtain is explicit with effectively computable constants which depend solely
on the genus of $X$, the dimension of $\chi$, the length of shortest geodesic on $X$ and the smallest non-zero eigenvalues of the weighted Laplacian $\Delta_{2k}$ as well that of 
the scalar Laplacian $\Delta_{0}$.
Our second result studies the asymptotic behavior of the spectral determinant $\det\Delta_{2k_n}$ for a sequence $(X_{n}, \chi_{n}, k_{n})$
for which the genus of $X_n$ tends to infinity. Under reasonably general circumstances, namely the existence of a weak spectral gap, a uniform discreteness of the
underlying Fuchsian group, and a type of non-accumulation of bounded geodesics, we prove that $\log\det\Delta_{2k_n}/\mathrm{vol}(X_{n})$ converges to a constant $C_{\alpha}$ which depends only on $\alpha=\lim_{n\to\infty} k_n$.  Our result is deterministic
and is compatible with the three well-studied probabilistic models, namely Weil-Petersson, 
Brooks-Makover, and random covers model.

\end{abstract}

\section{Introduction}
Let $X$ be a smooth, compact hyperbolic Riemann surface of genus $g$. There exists a discrete subgroup $\Gamma<{\rm SL}_2(\mathbb R)$ 
with projection $\overline\Gamma$ onto ${\rm PSL}_2(\mathbb R)$
with $-I_2\in\Gamma$, where $I_2$ is the identity matrix, and where $\Gamma$ acts on the hyperbolic upper half
plane $\mathbb{H}$ such that $X$ can be identified with the quotient space $\overline\Gamma \backslash \mathbb{H}$. 
Consider an $m$ dimensional 
unitary multiplier system $\chi$ on $\Gamma$, which equals a product of an $m$-dimensional unitary representation 
$\tilde\chi$ of the fundamental group $\pi_{1}(X)$ of $X$, and a multiplier system $v$ on $\Gamma$ of admissible weight $2k$.    Associated to
$(X,\chi,k)$, we study two separate but closely related mathematical objects:  The prime geodesic counting function, and the
spectral determinant.   We now will describe our results for each of these objects.

\subsection{The prime geodesic counting function}
Any non-identity element
$P \in \Gamma$ can be written as $P = P_{0}^{n}$ for some positive integer $n$ and $P_{0} \in \Gamma$ is
primitive, meaning $P_{0}$ is not a positive power any other element in $\Gamma$.  The (classical) prime geodesic theorem is the study of
the asymptotic behavior of the counting function
$$
\pi(x)= \sum_{P_{0}\in\Gamma,\, {\rm Tr}(P_{0}) >2: \, N(P_{0})\leq x} 1
$$
as $x$ tends to infinity,
where the sum is over all primitive $P_{0} \in \Gamma$ and where $N(P_{0})$ denotes the norm of $P_{0}$.
For general $X$, the best known result is that
\begin{equation}\label{eq:classical_PGT}
\pi(x) = \text{\rm li}(x) + \sum_{3/4 < s_{j}<1}\text{\rm li}(x^{s_{j}}) + O\left(x^{3/4}/\log x\right)
\end{equation}
where $s_{k}(1-s_{k}) = \lambda_{k}$ is a eigenvalue of the hyperbolic Laplacian $\Delta_{0}$ which acts on smooth functions on $X$
and where $\text{\rm li}(x)$ is the logarithmic integral function; see page 257 of \cite{Buser92}.

It is immediate that the prime geodesic theorem is a geometric analogue of the prime number theorem, which is a
mathematical question of unparalleled significance since the conjectured error term for the prime number theorem
is one manifestation of the Riemann hypothesis.  The study of the prime geodesic theorem began with Huber and
Selberg for general $X$, and there have been many recent developments when $X = \Gamma \backslash \mathbb{H}$ is the quotient
of the upper half plane $\mathbb{H}$ by an arithmetic group $\Gamma$.  While it has been shown that the prime
geodesic theorem is related to other areas in mathematics such as dynamics and quantum chaos, one can follow the
(implicit) point of view of Huber and Selberg that, perhaps, further understanding of the prime geodesic theorem
may provide insight into the Riemann hypothesis.

Using well-studied number theoretic
techniques, the study of $\pi(x)$ can be shown to be equivalent to the study of the asymptotic behavior in $x$ of
$$
\Psi(x)= \sum_{P\in\Gamma,\, {\rm Tr}(P) >2: \, N(P)\leq x} \Lambda(P)
\,\,\,\,\,
\text{\rm where}
\,\,\,\,\,
\Lambda(P) := \frac{\log N(P_{0})}{1-N(P)^{-1}};
$$
see, for example, page 82 of \cite{He}.  With this in mind,
our first result in this article is the following analogue of  \eqref{eq:classical_PGT}
associated to the triple $(X,\chi,k)$.

\begin{thm} Associated to $(X,\chi,k)$, let
$$
\Psi(x,\chi)= \sum_{P\in\Gamma,\, {\rm Tr}(P) >2: \, N(P)\leq x} \Lambda(P)\mathrm{Tr}(\chi(P))
$$
Let $\Delta_{2k}$ be the weighted hyperbolic Maass-Laplacian, defined by \eqref{eq. defn weighted Lapl} below, which has corresponding eigenvalues
$\lambda_{j}$ of multiplicity $m(\lambda_{j})$, and write $\lambda_j = s_{j}(1-s_{j})$.
Then,
\begin{equation}\label{eq. distr of Psi of chi}
  \Psi(x,\chi)= \sum_{\lambda_j\leq 1/4}m(\lambda_j)\frac{x^{s_j}}{s_j} + \mathcal{E}(x; X,\chi)
\end{equation}
with error $\mathcal{E}(x; X,\chi)$ which can be estimated in either of the following two ways.
\begin{enumerate}[label=\alph*)]
\item  Let ${\rm sys}(X)$
be the systole on $X$ (meaning the length of the shortest closed geodesic), and let $\mathcal N_k$ (resp. $\mathcal N_0$) denote number of eigenvalues
which are $\leq 1/4$ of the Laplacian $\Delta_{2k}$ (resp. $\Delta_0$).  Then for $x\geq 2$,  $\mathcal{E}(x; X,\chi,x) = g(X,\chi)x^{3/4}$
where
\begin{equation}\label{eq:PGT_firstbound}
|g(X,\chi)|\leq C_1\frac{m(g-1)}{{\rm sys}(X)} + (\mathcal N_k -1) \left(\frac{C_2}{1-s_1}+C_3 \right) +
(\mathcal N_0 -1) \left(\frac{C_4}{1-\widetilde s_1}+C_5 \right),
\end{equation}
for some absolute constants $C_1,\, C_2,\, C_3, \, C_4, \, C_5$ independent of $X$ and $\chi$. 
\item
There is an explicitly computable, universal, constant $x_{0}$ such that for $x > x_{0}$ we have
\begin{equation}\label{eq:PGT_secondbound}
|\mathcal E(x; X,\chi)|\leq \mathcal C_1mg x^{5/6} + \mathcal C_2 mg x^{2/3} \max\left\{ 0, \log\left(\frac{1}{{\rm sys}(X) x^{1/6}}\right)\right\}.
\end{equation}
\end{enumerate}
\end{thm}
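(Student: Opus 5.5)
The plan is to follow the classical route through the Selberg trace formula for $\Delta_{2k}$ acting on $(\Gamma,\chi,k)$-automorphic forms, tested against a smoothed version of the indicator of $[0,\log x]$. The unsmoothed counting function cannot be handled directly, so I would work with a smoothed Chebyshev function instead.

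\textbf{Smoothing.} Introduce
$$
\Psi_1(x,\chi)=\int_1^x \Psi(t,\chi)\,dt ,
$$
or, for part b), convolve with a compactly supported bump of width $\delta$ in $\log N(P)$. The hyperbolic side of the trace formula, with test function $g(u)$ supported in $|u|\le \log x+\delta$, is
$$
\sum_P \frac{\log N(P_0)\,\mathrm{Tr}\chi(P)}{N(P)^{1/2}-N(P)^{-1/2}}\, g(\log N(P)).
$$
Multiplying by $e^{u/2}$ converts this into the smoothed $\Psi(x,\chi)$.

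\textbf{Spectral side.} The spectral side splits into three pieces.
\begin{enumerate}[label=(\roman*)]
\item Small eigenvalues $\lambda_j\le 1/4$. These give the main terms $x^{s_j}/s_j$, plus derivatives coming from smoothing.
\item The weight-dependent identity contribution. This is proportional to $m(g-1)$ times an integral of $r\tanh(\pi r)$, with the weight-$k$ modification.
\item The tempered spectrum, $r_j$ real. Its contribution is bounded by $x^{1/2}$ times a sum of $|\hat g(r_j)|$.
\end{enumerate}

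For (iii), the decay of $\hat g(r)$ is $\min(1,(\delta r)^{-A})$. So I need a Weyl-type bound
$$
\#\{r_j\le T\}\ll m(g-1)T^2+m(g-1)T/\mathrm{sys}(X)+\mathcal N_k ,
$$
uniform in the geometry. I would derive it from the trace formula itself, using a positive test function together with the positivity of the heat kernel. The geometric side is controlled by counting closed geodesics of length $\le L$, which is at most $C\, m(g-1)e^{L}/\mathrm{sys}(X)$ by a standard packing argument.

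\textbf{Part a).} Take the one-fold integrated function $\Psi_1$ with a fixed smoothing and optimise the step $h$ in the finite difference
$$
\Psi(x)\approx \bigl(\Psi_1(x+h)-\Psi_1(x)\bigr)/h .
$$
This is Hejhal's argument, giving $h\sim x^{3/4}$ and error $x^{3/4}$. The monotonicity used to undo the differencing is not available, because $\mathrm{Tr}\chi(P)$ is not positive. I would bound $|\mathrm{Tr}\chi(P)|\le m$ and compare with the scalar $\Psi(x,1)$ on the short interval $[x,x+h]$. This step introduces the scalar prime geodesic theorem, and hence the terms involving $\mathcal N_0$ and $\widetilde s_1$. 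The factors $1/(1-s_1)$ and $1/(1-\widetilde s_1)$ arise from bounding $(x+h)^{s_j}-x^{s_j}$ and $x^{s_j}/s_j$ uniformly when $s_j$ is near $1$; the constant term $1$ in $\mathcal N-1$ accounts for $s_0=1$ when it occurs.

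\textbf{Part b).} Use bump width $\delta=x^{-1/6}$, so that the short-interval error is $m g\,x\delta=mgx^{5/6}$. The tempered spectrum then contributes $x^{1/2}\delta^{-1}\cdot mg=mgx^{2/3}$. Here I would use the Weyl bound without the systole term when $\delta\,\mathrm{sys}\ge1$. When $\mathrm{sys}$ is small, short geodesics of length below $\delta$ distort the count, and summing $1/\sinh(\ell/2)$-type weights produces the factor $\log\bigl(1/(\mathrm{sys}\,x^{1/6})\bigr)$. The small-eigenvalue terms have at most $2g-2$ (times $m$) exceptional eigenvalues by Otal–Rosas. Their smoothing errors are $O(mg\,x\delta)$, which is why no dependence on $s_1$ appears.

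\textbf{Main obstacle.} I expect the main difficulty to be making all constants absolute and uniform in $X$: the Weyl-type bound for $\Delta_{2k}$ with explicit dependence on $m$, $g$ and $\mathrm{sys}(X)$, and the handling of the non-positive weights $\mathrm{Tr}\chi(P)$. For the Weyl bound I would first try the heat kernel trace at time $t=\delta^2$, with the hyperbolic terms estimated through the geodesic counting bound.
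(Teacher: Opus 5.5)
Your part a) is essentially the paper's proof. The paper also smooths at logarithmic scale $x^{-1/4}$: it uses Randol's $g_T^\varepsilon(u)=2\cosh(u/2)(I_T\ast\varphi_\varepsilon)(u)$ with $\varepsilon=e^{-T/4}$, the analogue of your $h\sim x^{3/4}$. It gets a Weyl bound carrying a factor $1/\mathrm{sys}(X)$ from the heat trace, Buser's geodesic count and Karamata's theorem. It handles the non-positive weights exactly as you suggest: it bounds the smoothing error by $2m\bigl(\Psi(e^{T+\varepsilon},\mathrm{Id}_1)-\Psi(e^{T-\varepsilon},\mathrm{Id}_1)\bigr)$ and invokes the scalar case, which is where $\mathcal N_0$ and $\widetilde s_1$ enter.

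Part b), however, does not go through as you set it up. Its whole content is that the systole enters only logarithmically, uniformly over all surfaces of genus $g$. In your scheme the short-interval comparison only controls $\Psi(x,\chi)$ minus its smoothing. The smoothed weighted sum must still be evaluated two-sidedly from the weight-$k$ trace formula, so you must bound $x^{1/2}\sum_j|\hat g(r_j)|$ over the tempered spectrum of $\Delta_{2k}$. The Weyl bound you state is $\#\{r_j\le T\}\ll m(g-1)(T^2+T/\mathrm{sys}(X))+\mathcal N_k$, obtained from the heat trace with hyperbolic terms estimated through the count $Cm(g-1)e^{L}/\mathrm{sys}(X)$. It yields only $O\bigl(mg\,x^{1/2}\log x/\mathrm{sys}(X)\bigr)$ for the tempered part. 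That exceeds $\mathcal C_1mgx^{5/6}+\mathcal C_2mgx^{2/3}\log\bigl(1/(\mathrm{sys}(X)x^{1/6})\bigr)$ as soon as, say, $\mathrm{sys}(X)<x^{-1/2}$. Your remark that $1/\sinh(\ell/2)$-type weights produce the logarithm points the right way, but it conflicts with the counting argument you actually propose.

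There are two ways to close this gap.

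\textbf{Repair within your framework.} You need a Weyl bound with only logarithmic systole dependence. That requires keeping the weights $\ell(\gamma_0)/(2\sinh(k\ell(\gamma_0)/2))\approx 1/k$ on the iterates of the at most $3g-3$ short primitive geodesics, instead of bounding them by $1$. For instance, the trace formula with $f_\epsilon=\eta_\epsilon\ast\eta_\epsilon$, whose transform $\widehat{\eta}(\epsilon r)^2$ is nonnegative, gives $\#\{|r_j|\le c/\epsilon\}\ll mg\epsilon^{-2}+m\epsilon^{-1}\sum_{\ell(\gamma)<\epsilon}\log(\epsilon/\ell(\gamma))$. You also need this bound for $\mathcal N_k$, since Otal--Rosas concerns only $\Delta_0$. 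It gives $\mathcal N_k\ll mg\bigl(1+\max\{0,\log(1/\mathrm{sys}(X))\}\bigr)$. That suffices provided you use that an even bump costs only $O(\delta^2x^{s_j})$ per small eigenvalue; the paper is also silent on $\mathcal N_k$ in b).

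\textbf{The paper's route.} The paper avoids a Weyl law for the tempered spectrum altogether. Following Wu--Xue, it uses $\varphi^{\pm}_{\tau,\epsilon}=\frac12\bigl(f_\epsilon(\cdot-\tau)+f_\epsilon(\cdot+\tau)\bigr)\pm f_\epsilon$. Their transforms $(\cos(\tau r)\pm1)\widehat{\eta}(\epsilon r)^2$ have a fixed sign on $\mathbb R$, so the tempered eigenvalues are simply dropped in one-sided inequalities. The systole then enters only through the terms with $k\ell(\gamma)<\epsilon$. One-sided inequalities need a non-decreasing counting function. So instead of your short-interval comparison, the paper uses Hejhal's lemma that $\Psi(x,\chi)$ equals the sum with $\mathrm{Re}\,\mathrm{Tr}(\chi(P))$. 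It runs the argument on $\Psi_c=\Psi(\cdot,\mathrm{Id}_1)+c\,\Psi(\cdot,\chi)$ with $0<c<1/m$, whose trace formula is the scalar one plus $c$ times the weighted one.

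This also explains the exponent. In the paper, $5/6$ comes from balancing $g\epsilon e^{\tau}$ against the identity term $g\epsilon^{-2}e^{\tau/2}$. Your own bookkeeping ($mgx\delta$ against $mgx^{1/2}/\delta$) would balance at $\delta=x^{-1/4}$. So your choice $\delta=x^{-1/6}$ is read off from the statement rather than forced by your argument.
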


\vskip .10in
Part b) of the above result generalizes Theorem 2 of \cite{WX25} who considered the case when $\chi$ is the 
trivial representation and with multiplier weight $k=0$. Part a) is related to main theorems of \cite{FJK11} and \cite{Av21} 
where an explicit error term for the prime geodesic theorem in the setting of co-finite, not necessarily co-compact, 
$\overline\Gamma$, identity representation, and $k=0$. The implied constant in the error term from \cite{FJK11} and \cite{Av21} 
depends upon the diameter of the surface which can grow much faster than its volume. For that reason these results could not be 
used to study behavior of the spectral determinant as the volume grows to infinity. In Theorem A, we did not try to optimize 
implied constants. Rather, our aim was to deduce an upper bound for the constant in the error term which is independent upon 
the diameter. It is possible to deduce better bounds for the implied constants in Theorem A a) using the same method. We 
decided not to pursue that because such a result is not needed for our main application, which is to study sequences of spectral determinants.

\subsection{Spectral determinants}

As stated, let $\Delta_{2k}$ be the weighted Maass-Laplacian of admissible weight $2k$, meaning that $km(2g-2)\in\mathbb Z$. The study of the weighted Maass-Laplacian is relevant from the perspective of automorphic forms (see \cite{Fa77} or \cite[pp. 481--496]{Hejhal83}) where by varying the weight $2k$ allows one to derive valuable conclusions about spaces of automorphic forms. An interested reader is referred to Section 1 of \cite{KetAll21} and references therein for a historical background and overview of  applications of $\Delta_{2k}$ to automorphic forms.

Moreover, the weighted Maass-Laplacian in dimension $m=1$ is closely related to the magnetic Laplacian  which is an operator that models the quantum mechanics of a charged particle moving on a surface under the influence of an external, constant magnetic field $F=Bd\mu$, where $d\mu$ is the volume element on $X$, see \cite{CL26, Co86,  KT19, KT22}. Namely, the magnetic Laplacian $\Delta^B$ is well defined if and only if the Dirac quantization condition, stating that the integral of the magnetic field $Bd\mu$ over the entire surface must be an integral multiple of $2\pi$ is fulfilled, thus forcing $(2g-2)B\in\mathbb Z$\footnote{See also \cite{Ta24} for both mathematical and physical analysis of this fact.}. If $B$ satisfies the Dirac quantization condition, then $\Delta^B = \Delta_{2B} + B^2$, meaning that the spectral properties of $\Delta_{2B}$ are closely related to those of $\Delta^B$.

 Let $\{\lambda_{k}\}$ be the sequence
of eigenvalues of $\Delta_{2k}$, and let $\zeta_{X,\chi,k}(s)= \sum \lambda_{k}^{-s}$ be the spectral zeta function which is
defined from the subset of non-zero eigenvalues.  After one proves that $\zeta_{X,\chi,k}(s)$ admits a meromorphic continuation
from $\text{\rm Re}(s) > 1$ to all $s \in \mathbb{C}$ and is holomorphic at $s=0$.  Then, 
one defines the determinant of the Laplacian $\mathrm{det}(\Delta_{2k_n})$ of
$\Delta_{2k}$, or the spectral determinant, by
$$
 \mathrm{det}(\Delta_{2k}) = -\zeta'_{X,\chi,k}(0).
$$
Rather than study the spectral determinant for a fixed triple $(X,\chi,k)$, we consider a sequence
$(X_{n},\chi_{n},k_{n})$ under certain assumptions which, in somewhat general terms, are as follows.

\medskip\noindent
({\bf Weak spectral gap}) Let $\mathcal N_{k,n}$ be the number of eigenvalues of
  $\Delta_{2k_n}$ which are less than or equal to $1/4$, and let $\lambda_{1,n}$ be  the smallest non-zero eigenvalues.  Then, we assume there is a
$\beta < \infty$ independent of $n$ such that
\begin{equation}
\limsup_{n\to\infty} \frac{\mathcal N_{k,n}}{\lambda_{1,n}{\rm vol}(X_n)}=\beta.
\end{equation}

\medskip\noindent
({\bf Uniform discreteness}) Let
\begin{equation}
    1<\delta_n = \min\limits_{P\in\Gamma_n,\, P\neq \mathrm{Id}}\{N(P)\}.
\end{equation}
    Then there exists $\eta>1$ independent of $n$ such that $\delta_n\geq \eta$ for all $n\geq 1$.

\medskip\noindent
({\bf Non-accumulation of small geodesics})
For some constants $C$, $L$ and $0 < \kappa < 1$, we have for all $n$ the bound
$$
N_{X_n}(L):=|\{P\in\Gamma_n: N(P)\leq L\}| \leq C{\rm vol}(X_n)^{\kappa},
$$
where $|A|$ denotes the cardinality of the finite set $A$.  Following \cite{naud2023determinants}, we refer to this
assumption as $\mathcal{H}(C,L,\kappa)$.

In Remarks \ref{rm:det_and_random} and \ref{rm:unif disc} below, we discuss rationale behind the first 
two assumptions and their compatibility with models of random compact Riemann surfaces. In \cite{naud2023determinants}, 
Naud proved that the third assumption holds true with probability one in Weil-Petersson, Brooks–Makover, and random covers model.

\medskip
With these assumptions, we have the following result.

\begin{thm} Let $\{X_n\}$ be a sequence of compact Riemann surfaces of finite volume satisfying the
weak spectral gap and uniform discreteness assumptions.
Let $\{\chi_n\}$ be an associated sequence of $m$ dimensional unitary multiplier systems of weights $2k_n\in(0,2)$, and assume that
$\lim\limits_{n\to \infty} k_n =\alpha\in[0,1)$.
If $\alpha =0$, we further assume that the multiplicities $m(\lambda_{0,n})$ of the first eigenvalue of the Laplacian $\Delta_{2k_n}$
on $X_{n}$ are bounded uniformly in $n$.  Assume there is universal constant $C$ such that
for all $\varepsilon>0$ there exists an $L_\varepsilon >0$, of order $\log L_{\varepsilon} = O(1/\varepsilon)$ (and will be determined
precisely below) for which $\mathcal{H}(C,L_\varepsilon,\kappa)$ holds.  Then, with all this, there is an explicitly computed constant $C_{\alpha}$, defined in \eqref{eq:alpha_constant} below, such that
for $n$ sufficiently large, we have that
\begin{equation}\label{eq:det_asymptotics}
\bigg| \frac{2\pi \log\mathrm{det}(\Delta_{2k_n})}{m{\rm Vol}(X_{n})}  - C_{\alpha}\bigg| <\varepsilon.
\end{equation}
\end{thm}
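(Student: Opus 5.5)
The plan follows Naud's strategy for determinants along sequences of surfaces, adapted to the weighted setting. The key input is an expression of $\log\det\Delta_{2k}$ through the Selberg trace formula for $(X,\chi,k)$, with the hyperbolic contribution controlled by the prime geodesic theorem with explicit error, Theorem A.

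\emph{Step 1: exact formula.} Start from the heat-kernel representation
$$\log\det\Delta_{2k} = -\frac{d}{ds}\Big|_{s=0}\frac{1}{\Gamma(s)}\int_0^\infty t^{s-1}\bigl(\mathrm{Tr}\,e^{-t\Delta_{2k}}-\dim\ker\Delta_{2k}\bigr)dt.$$
Insert the weighted trace formula for the heat kernel. This gives $\mathrm{Tr}\,e^{-t\Delta_{2k}} = m\,\mathrm{vol}(X)\,I_k(t) + H_\chi(t)$, where $I_k(t)$ is the explicit identity term of weight $2k$ and $H_\chi(t)$ is the hyperbolic sum
$$H_\chi(t)=\sum_{P}\frac{\Lambda(P)\,\mathrm{Tr}\chi(P)}{N(P)^{1/2}-N(P)^{-1/2}}\cdot\frac{e^{-t/4}e^{-(\log N(P))^2/4t}}{\sqrt{4\pi t}}\times(\text{weight-}k\text{ factor}).$$
The identity term contributes $m\,\mathrm{vol}(X)\,c(k)/(2\pi)$, with $c(k)$ explicit and continuous in $k$. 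This defines $C_\alpha := \lim_n c(k_n) = c(\alpha)$, so that term converges by continuity. What remains is to show that $\frac{1}{\mathrm{vol}(X_n)}\times(\text{spectral/geodesic remainder})\to 0$.

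\emph{Step 2: cutoff.} Split the $t$-integral at a large time $T$. For $t\ge T$, bound the large-$t$ part by $\sum_{\lambda_j>0} e^{-T\lambda_j}/\lambda_j$ type terms. Using $\lambda_{1,n}$ together with the weak spectral gap, the low eigenvalues contribute $O(\mathcal N_{k,n}\lambda_{1,n}^{-1}e^{-T\lambda_{1,n}})$ plus logarithmic terms $\mathcal N_{k,n}\log(1/\lambda_{1,n})$. Eigenvalues above $1/4$ contribute at most $\mathrm{vol}(X_n)\,e^{-T/4}$ by Weyl-type counting. The weak spectral gap assumption is designed so that, after dividing by volume, these contributions are $O(\beta\cdot\text{small}) + O(e^{-T/4})$. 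I expect to need the log term handled via $\log x\le x$, and the case $\alpha=0$ uses the bounded multiplicity of the zero mode $\lambda_{0,n}$. For $t\le T$, everything reduces to the hyperbolic term $\int_0^T H_\chi(t)\,dt/t$ (suitably regularized), which is a sum over $P$ of a weight $\phi_T(\log N(P))$ decaying like $N(P)^{-1/2}$ times a Gaussian-type cutoff.

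\emph{Step 3: geodesic sum, the main obstacle.} Split the sum over $P$ at $N(P)\le L_\varepsilon$ and $N(P)>L_\varepsilon$.
\begin{itemize}
\item For short geodesics, uniform discreteness ($N(P)\ge\eta$) bounds each term by a constant. Hypothesis $\mathcal H(C,L_\varepsilon,\kappa)$ bounds their number by $C\,\mathrm{vol}^\kappa = o(\mathrm{vol})$.
\item For long geodesics, write the sum as a Stieltjes integral $\int_{L_\varepsilon}^\infty \phi(\log x)\,x^{-1/2}\,d\Psi(x,\chi)$. Substitute Theorem A: the main terms $x^{s_j}/s_j$ with $s_j>1/2$ are at most $\mathcal N_{k,n}$ in number, and after integration against $x^{-1/2}$ they combine with the small-eigenvalue terms of Step 2; alternatively they are bounded using the weak gap. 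The error term, via part b), is $O(mg\,x^{5/6})$, and integration by parts yields $O\bigl(mg\,L_\varepsilon^{-\delta}\bigr)$ for some $\delta>0$ once the weight decays fast enough.
\end{itemize}
The delicate point is that the $x^{5/6}$ error beats $x^{1/2}$, so the raw Dirichlet series does not converge. The fix I would try first is to keep the Gaussian factor $e^{-(\log x)^2/4t}$ with $t\le T$, which gives superpolynomial decay. This forces coupling the choices: $T$ is chosen depending on $\varepsilon$, and then $\log L_\varepsilon$ must exceed a multiple of $T$ so that $mg\,\exp(-c(\log L_\varepsilon)^2/T)\le\varepsilon\,\mathrm{vol}$. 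This is consistent with the requirement $\log L_\varepsilon=O(1/\varepsilon)$. The $\max\{0,\log(1/(\mathrm{sys}\,x^{1/6}))\}$ term vanishes for $x$ large because $\mathrm{sys}(X_n)\ge\log\eta$.

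\emph{Step 4: assembly.} Since $g-1=\mathrm{vol}/4\pi$, all the pieces are $\le\varepsilon\,m\,\mathrm{vol}(X_n)$ for $n$ large, which gives \eqref{eq:det_asymptotics}.
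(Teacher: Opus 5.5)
Your heat-kernel route can be made to work, but Step 2 as written does not give the smallness you claim. After dividing by $\mathrm{vol}(X_n)$, your term $\mathcal N_{k,n}\lambda_{1,n}^{-1}e^{-T\lambda_{1,n}}$ is at most $\beta e^{-T\lambda_{1,n}}$. The weak spectral gap allows $\lambda_{1,n}\to 0$ (e.g.\ $\mathcal N_{k,n}$ bounded and $\lambda_{1,n}\asymp 1/\mathrm{vol}(X_n)$), and then this tends to $\beta$ for every fixed $T$. The logarithmic term $\mathcal N_{k,n}\log(1/\lambda_{1,n})$, treated with $\log x\le x$, is likewise only $O(\beta\,\mathrm{vol}(X_n))$.

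The weak gap gives boundedness of $\mathcal N_{k,n}/(\lambda_{1,n}\mathrm{vol}(X_n))$ and nothing more. The small factor has to come from the cutoff, and you dropped it. The correct estimate is
\[
\int_T^\infty e^{-t\lambda}\,\frac{dt}{t}\le \frac{e^{-T\lambda}}{T\lambda},
\]
so the eigenvalues in $[\lambda_{1,n},1/4]$ contribute at most $\mathcal N_{k,n}/(T\lambda_{1,n})\le(\beta+o(1))\,\mathrm{vol}(X_n)/T$. This $1/T$ is the exact counterpart of the factor $1/\log L_\varepsilon$ in the paper's tail bound, and it is what forces $T\asymp 1/\varepsilon$.

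The same issue arises for $\lambda_{0,n}$ when $\alpha=0$. There is no zero mode ($\lambda_{0,n}=k_n(1-k_n)>0$), and bounded multiplicity must be paired with the lower bound $\lambda_{0,n}\mathrm{vol}(X_n)\ge \pi/m$. That bound comes from admissibility, $k_n\in\mathbb Z/(2m(g_n-1))$, and gives $m(\lambda_{0,n})/(T\lambda_{0,n}\mathrm{vol}(X_n))\le mM/(\pi T)$. You also need to subtract the identity term on $[T,\infty)$, which is $O(\mathrm{vol}(X_n)e^{-cT})$. Finally, invoke uniform discreteness to make the Weyl bound, which carries a factor $1/\mathrm{sys}(X_n)$, uniform in $n$.

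Step 3's ``delicate point'' is a misdiagnosis, and correcting it shows how close you are to the paper. The integrand defining $\phi_T(u)=\int_0^T(4\pi t)^{-1/2}e^{-t/4}e^{-u^2/4t}\,dt/t$ is positive, so $0\le\phi_T\le\phi_\infty$ with $\phi_\infty(u)=e^{-u/2}/u$, and likewise $|\phi_T'|\le|\phi_\infty'|$. Hence the weight $w_T(x)=x^{-1/2}\phi_T(\log x)$ against $d\Psi(x,\chi)$ satisfies $0\le w_T(x)\le 1/(x\log x)$ and $|w_T'(x)|\le(\log x+1)/(x^2\log^2 x)$, uniformly in $T$. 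So the $O(mg\,x^{5/6})$ error is integrable after integration by parts without any Gaussian. Tying $\log L_\varepsilon$ to $T$ is then unnecessary; your coupling does work, but only because you discarded the factor $e^{-u/2}/u$.

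Letting $T\to\infty$ turns your argument into the paper's proof. There, $-\int_0^\infty H_\chi(t)\,dt/t=\lim_{s\to1^+}\log Z(s;\chi_n)=-\int\Psi(x,\chi_n)\frac{\log x+1}{x^2\log^2x}\,dx$, split once at $L_\varepsilon$. The small eigenvalues enter only through the main terms of $\Psi$, and smallness comes from $1/\log L_\varepsilon$. Your time cutoff buys an absolutely convergent geodesic sum on $[0,T]$, so no limit $s\to1^+$ has to be justified. The price is a second cutoff and handling the small eigenvalues twice.

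Finally, you never show that $c(\alpha)$ equals \eqref{eq:alpha_constant}, which the statement requires. The paper gets this for free by starting from Gong's identity \eqref{e:det_Delta} at $s=1$. There $\log Z_I(1)+\tilde c=\frac{m\,\mathrm{vol}(X_n)}{2\pi}C_{k_n}$, where $C_{k_n}$ is the expression \eqref{eq:alpha_constant} at $\alpha=k_n$. You should either invoke that, matching your identity term with $\log Z_I(1)+\tilde c$, or carry out the Barnes $G$ evaluation of the Mellin transform of the identity term yourself.
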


\medskip
In the case when $\lim_{n\to\infty}k_{n} = 0$ and $m=1$, our result specializes to the main theorem in \cite{naud2023determinants}.  Beyond this example, we are able to specialize to new instances not considered elsewhere, such
as when $k_{n} = p/q$ for some rational $p/q \in (0,2)$ and $g_{n}$ is $1$ mod $q$ and tends monotonically to infinity.
We refer to the corollaries below for this results and additional corollaries.

The asymptotic result \eqref{eq:det_asymptotics} is a deterministic result that is more general than
the one obtained by
simply considering a sequence $(\Gamma_{n})_{n\in\mathbb{N}}$,
where $\Gamma_{n+1}$ is a finite index subgroup of $\Gamma_{n}$, though such a setting
is one possibility.  Also, our result is deterministic, but it is, indeed, consistent with the three main models of random surfaces;
see Remarks \ref{rm:det_and_random} and \ref{rm:unif disc} for further discussion.

\subsection{Organization of the article}

In section \ref{sec: prelim}, we establish notation and recall results from the literature.  It was our aim to make the
article as self-contained as possible, so we have included a through discussion of preliminary material.

In section \ref{sec:pseudo_primes}, we prove \eqref{eq:PGT_firstbound} using the trace formula and following the approach of
\cite{FJK11} which focuses on developing explicit and effective bounds; a refinement of the methods from \cite{FJK11}
is given in \cite{Av21}.  Initially, we obtain an effective, albeit somewhat
imprecise, bound for Weyl's law; see \eqref{eq. number eigenv trivial bound 1}.  From the Weyl's law bound, we first prove
\eqref{eq:PGT_firstbound} when $\chi$ is the one dimensional trivial representation and $k=0$; see \eqref{eq. pre-final for id}
and \eqref{eq. sum with -1}.  The preliminary result when $\chi$ is trivial gives important bounds for the contribution to the
trace formula from the hyperbolic elements, from which we prove the general result \eqref{eq:PGT_firstbound}.

The bound \eqref{eq:PGT_secondbound} is proved in section \ref{sec:uniform_PGT}.  Again, the result follows from the trace formula
where in this setting we are employing aspects of the approach to Theorem 2 from \cite{WX25}.  As in section \ref{sec:pseudo_primes},
we first prove \eqref{eq:PGT_secondbound} when $\chi$ is trivial.  In order to extend the analysis to general $\chi$ and $k$, we use
a critical lemma from \cite{He}; see \eqref{eq:hejhal_bound}.  Specifically, the series which defines $\Psi(x,\chi)$ is, in fact,
equal to the series obtained by replacing the factor $\mathrm{Tr}(\chi(P))$ by $\mathrm{Re}\left(\mathrm{Tr}(\chi(P))\right)$.  In doing so,
we get a new, real-valued series \eqref{eq:real_series} which we can analyze using the information derived in case $\chi$ is trivial, and using trace formulas for trivial and non-trivial $\chi$. By combining those results we deduce the bound  \eqref{eq:PGT_secondbound}.

In section \ref{sec:spectral_dets}, we use the asymptotic formula \eqref{eq. distr of Psi of chi} and the bound  \eqref{eq:PGT_secondbound} to prove \eqref{eq:det_asymptotics}.
To begin, we use a relation between the Selberg zeta function and the regularized determinant to reduce the question to the asymptotic analysis of the
special value of the Selberg zeta function at $s=1$;  see \eqref{e:det_Delta}.  Using \eqref{eq. distr of Psi of chi} and \eqref{eq:PGT_secondbound}, the
special value of the Selberg zeta function can be expressed as a Stieltjes integral involving $\Psi(x,\chi)$; see \eqref{eq. logZ at 1}.
The aforementioned asymptotic conditions (weak spectral gap, uniform discreteness, and non-accumulation of small geodesics) then
are employed to complete the proof.

\section{Preliminaries}\label{sec: prelim}

\subsection{Basic notation}\label{sec:notation}

Let $\Gamma< {\rm SL}_2(\mathbb{R})$ be a discrete subgroup acting totally discontinuously on the upper half plane
$\mathbb{H}=\{x+iy:\, x, y\in\mathbb{R},\, y>0\}$, such that $-I_2\in \Gamma$, where $I_2$ is the identity matrix.
Let $\mathcal{F}$ be a (Ford) fundamental domain for the action of $\Gamma$ on $\mathbb{H}$.

Let $\overline{\Gamma}$ be the projection of $\Gamma$ into $\mathrm{PSL}(2,\mathbb{R})$ and
$X$ be the Riemann surface associated to the quotient
space $\overline{\Gamma}\backslash\mathbb{H}$. We assume that $X$ is smooth and compact, which is equivalent to saying
that $\Gamma$ is Fuchsian group of the first kind containing only hyperbolic elements. Let $g>1$ be the genus of
$\overline{\Gamma}\backslash\mathbb{H}$ \footnote{We will also say that $g>1$ is a genus of $\Gamma$.}. The hyperbolic
area of  $\mathcal{F}$, or equivalently, the volume of $X$ is
\begin{equation}\label{eq: volume of F}
\omega(\mathcal{F})={\rm vol}(X) = 2\pi(2g-2).
\end{equation}

\subsection{Unitary multiplier systems}\label{sec: mult systems}

For a positive integer $m$ we fix an $m\times m$ unitary multiplier system $\chi$ on $\Gamma$ of arbitrary admissible 
weight $2k\notin 2\Z$. By this we mean a product $\chi=v \cdot \tilde\chi$ of an irreducible $m\times m$ unitary 
representation $\tilde \chi: \overline\Gamma \to V$, where $V$ is an $m$-dimensional Hermitian vector space, and a 
weight $2k$ multiplier system. Note that the product $v \cdot \tilde\chi$ is well defined on $\Gamma$ because for 
$\gamma\in\Gamma$ either $\gamma$ or $-\gamma$ belong to $\overline \Gamma$. Therefore, for any $\gamma\in\Gamma$, we have $\chi(\gamma)=\tilde\chi(\tilde\gamma)v(\gamma)$, where $\tilde\gamma$ is the projection of $\gamma\in\Gamma$ to 
${\rm PSL}_2(\mathbb R)$.

Let us recall from pp. 331--337 of \cite{Hejhal83}
that the weight $2k$ multiplier system is a map $v:\Gamma \to S^1$, where $S^1=\{z\in\mathbb C:\,\ |z|=1\}$ such that
\begin{equation}\label{eq: multsysprop1}
v(-I_2)=e^{-2\pi i k},
\end{equation}
and, for any $\gamma$ and $\eta$ in $\Gamma$,  we have that
\begin{equation}\label{eq: multsysprop2}
v(\gamma\eta) =\sigma_{2k}(\gamma,\eta)v(\gamma)v(\eta),
\end{equation}
where $\sigma_{2k}$ is a weight $2k$ factor system, whose definition from
\cite[p. 332]{Hejhal83} we now recall.
Write
$$
\eta = \begin{pmatrix}\eta_{1} & \eta_{2} \\ \eta_{3} & \eta_{4}
\end{pmatrix}
\,\,\,\,\,
\text{\rm and}
\,\,\,\,\,
\gamma = \begin{pmatrix}\gamma_{1} & \gamma_{2} \\ \gamma_{3} & \gamma_{4}
\end{pmatrix}
\,\,\,\,\,
\text{\rm as well as }
\,\,\,\,\,
\gamma\eta = \begin{pmatrix}\delta_{1} & \delta_{2} \\ \delta_{3} & \delta_{4}
\end{pmatrix}.
$$
Then, for any $z \in \mathbb{H}$, we have that
$$
\eta z = \frac{\eta_{1}z + \eta_{2}}{\eta_{3}z + \eta_{4}}
\,\,\,\,\,
\text{\rm and}
\,\,\,\,\,
\gamma_{3}\eta z + \gamma_{4} = \frac{\delta_{3}z + \delta_{4}}{\eta_{3}z+\eta_{4}}.
$$
Then,
$\sigma_{2k}(\gamma,\eta) = \exp(2\pi ik w(\gamma,\eta))$ where
$$
w(\gamma,\eta) = \text{\rm arg}(\gamma_{3}\eta z + \gamma_{4})
+ \text{\rm arg}(\eta_{3}z + \eta_{4}) -  \text{\rm arg}(\delta_{3}z + \delta_{4})
$$
is independent of $z$ and is
an integer from the set $\{-1,0,1\}$.  Various co-cycle relations
satisfied by $w(\gamma,\eta)$ are given on page 18 of \cite{FiBook87}.

The weight $2k$ is \emph{admissible} if there exists a multiplier system with weight $2k$.
The admissible values of $k$ are the real numbers
which lie in the set
\[A_m(\Gamma):=\frac{1}{m} \frac{2\pi}{\omega(\mathcal{F})}\mathbb{Z}= \frac{\mathbb{Z}}{2m(g-1)};\]
see \cite[Proposition 2.3, p. 335]{Hejhal83}.
Trivially, for any $a\in \Z$, the real number $a/(m(g-1))$ is an admissible weight for a unitary multiplier system of dimension $m$.

\subsection{Weighted Laplacian}\label{ss:Laplace}
Let $\Delta_{2k}$ be the weight $2k$ Maass-Laplacian which is defined as
\begin{equation}\label{eq. defn weighted Lapl}
  \Delta_{2k} = -y^2\left(\frac{\partial^2}{\partial x^2} + \frac{\partial^2}{\partial y^2} \right) +2kiy \frac{\partial}{\partial x}.
\end{equation}
The operator $\Delta_{2k}$ acts on the space of twice continuously differentiable functions $f:\mathbb{H}\to V$ which
satisfy the transformation property that
\begin{equation}\label{eq: transf prop functions}
f(\gamma z) = \exp(2ik \arg(cz +d)) \chi(\gamma) f(z) \quad \text{ for all }
 \gamma = \bpm * & * \\ c & d\ebpm \in \Gamma.
\end{equation}
Here we choose $\arg(cz+d) \in (-\pi, \pi]$.
We identify $\Delta_{2k}$ with its self-adjoint extension to the Hermitian space of all $L^2$ functions on $\mathcal{F}$ satisfying the
transformation property \eqref{eq: transf prop functions}.
The operator $\Delta_{2k}$ has only the discrete spectrum with eigenvalues
$$
\lambda_0=|k|(1-|k|) \leq \lambda_1\leq \cdots \leq \lambda_j  \leq  \cdots
$$
which tend to $+\infty$ (\cite[p. 370]{Hejhal83}).
For each $\lambda_{j}$ we define $s_{j}$ as a solution to the equation $\lambda_{j} = s_j(1-s_j)$.

\subsection{The gamma function and Barnes double gamma function}
The Barnes
double Gamma function, also called the $G$-function, $G(s)$ is defined by the product expansion
\begin{equation*}\label{e:barnesG}
G(s+1) = (2\pi)^{\frac{s}{2}} e^{-\frac{s(s+1)}{2}-\frac{1}{2}\gamma s^2} \prod_{n=1}^\infty \bigg\{\bigg(1+\frac{s}{n}\bigg)^n e^{-s+\frac{s^2}{2n}}\bigg\},
\end{equation*}
where $\gamma$ is the is the Euler-Mascheroni constant.  It can be shown that
\begin{equation}\label{eq. funct we. Barnes}
G(s+1) = \Gamma(s)G(s),
\end{equation}
where $\Gamma(s)$ is the classical Gamma function; see \cite[\S5.17]{dlmf}.
Both the Gamma function $\Gamma(s)$ and the Barnes $G$-function $G(s)$ appear as factors in the functional equations for the
Selberg zeta function.  The function $\psi(s) = \Gamma'(s)/\Gamma (s)$ denotes the logarithmic derivative of $\Gamma(s)$.

As is common throughout the literature, we will use $\Gamma$ to denote either the Gamma function or a discrete group, as described above.  We will make it clear in context precisely what is being signified by $\Gamma$.

\subsection{The Selberg zeta function}\label{sec:Selberg_and_Ruelle}

Following \cite[p.496]{Hejhal83}, the Selberg zeta function associated to the multiplier system
$\chi$ is defined for $\Re(s)>1$ by the absolutely convergent product
\begin{equation}\label{e:Z_def}
Z(s; \chi) = \prod_{\substack{P_0, \\ {\rm Tr}(P_0)>2}} \prod_{\ell=0}^\infty \det\left(I_m - \chi(P_0) N(P_0)^{-s-\ell}\right);
\end{equation}
the product runs through all primitive hyperbolic elements $P_0$ of the group $\Gamma$, and $N(P_0)$ is the norm of the element $P_0$.
As above, $m$ is the dimension of the multiplier system $\chi$.
The Selberg zeta function is studied in great detail in \cite{Hejhal83}.
Specifically, it is shown that the Selberg zeta function admits a meromorphic continuation to all $s \in \mathbb{C}$.
Furthermore, the Selberg zeta function admits a functional equation which relates the value at $s$ to the value at $1-s$ (\cite[\S 5]{Hejhal83}).

\subsection{The trace formula}\label{ss:traceformula}
The analysis we undertake involves an application of the Selberg trace formula.
In the generality required for our work, the form of the
trace formula required was developed in \cite{FiBook87} and \cite{Hejhal83}.  For the convenience of the reader,
we repeat here the formulation as stated in \cite{Gong95}.

Let $h(z)$ be an even function which is holomorphic in the strip $\vert \text{\rm Im}(z) \vert < \text{\rm max}(1/2,\vert k \vert -1/2 + \delta)$
for some $\delta > 0$.  Also, assume that $\vert h(z) \vert = O(\text{\rm Re}(z) ^{-2-\delta})$ as $\vert \text{\rm Re}(z)\vert \rightarrow \infty$
in the horizontal strip.  Let $g$ be the Fourier transform of $h$, normalized so that
$$
g(u) = \frac{1}{2 \pi}\int\limits_{-\infty}^{\infty}h(z) e^{-izu}du.
$$
Recall that the set of eigenvalues $\{\lambda_j\} $ for integers $j \geq 0$ of the weighted Laplacian
are such that $\lambda_j=s_j(1-s_j)=\frac{1}{4}+r_j^2$. Then, with the notation as above,
the Selberg trace formula in the setting we are considering is the identity,
as stated on page 440 of \cite{Gong95}, that
\begin{align}\label{eq:stf_id1}
\sum\limits_{n\geq 0}h(r_{n}) &= \frac{m\omega(\mathcal{F})}{4\pi}\int\limits_{-\infty}^{\infty}rh(r)\frac{\sinh(2\pi r)}{\cosh(2\pi r) + \cos (2\pi k)}dr
\\& \label{eq:stf_id2} +\frac{m\omega(\mathcal{F})}{2\pi}\sum\limits_{0 \leq l < \vert k \vert -1/2}(\vert k \vert - l - 1/2)h(i(\vert k \vert -l - 1/2))
\\&\label{eq:stf_hyp1} +\sum\limits_{P, \text{\rm Tr}(P)>2}\text{\rm Tr}(\chi(P))\frac{\log N(P_{0})}{N(P)^{1/2}-N(P)^{-1/2}}g(\log N(P)).
\end{align}

The left-hand side of \eqref{eq:stf_id1} is the spectral side of the trace formula. Its right-hand-side, and \eqref{eq:stf_id2} stem from the identity element
in the discrete group $\Gamma$, while line \eqref{eq:stf_hyp1} contains information associated
to the hyperbolic elements of $\Gamma$.  The sum in \eqref{eq:stf_hyp1} is over all hyperbolic elements $P$, and where $P = P_{0}^{n}$ for some
primitive hyperbolic element $P_{0}$ and positive integer $n$.

\subsection{Zeta regularized determinants}\label{sec:regularizedZeta}
Following \cite[p.441-442]{Gong95}, we define the spectral zeta function $\zeta(w, s)$ for $\Re(s)>1$ and $\Re(w)$ sufficiently large by
\begin{equation}\label{e:spectralzeta_def}
\zeta(w, s) = \sum_{j\geq 0} (s_j(1-s_j) - s(1-s))^{-w} .
\end{equation}
As proved in \cite{Gong95}, one can take $\Re(s)$ and $\Re(w)$ large enough so that
the series in \eqref{e:spectralzeta_def} converges absolutely.
Going further, it is proved in \cite{Gong95} that for $\Re(s)>1$ and $|k|-s\notin\mathbb{Z}_{\geq 0}$, the spectral zeta function $\zeta(w,s)$ possesses meromorphic continuation to the whole $w-$plane and is holomorphic at $w=0$.  With this result,
the zeta regularized determinant $\det\left(\Delta_{2k}-s(1-s)\right)$ for $\Re(s)>1$ and $|k|-s\notin\mathbb{Z}_{\geq 0}$ is defined as
\begin{equation}\label{e:det_def}
\det\left(\Delta_{2k}-s(1-s)\right) = e^{-\frac{\partial}{\partial w}\left.\zeta(w, s)\right\vert_{w=0}}.
\end{equation}
With this, \cite[Theorem 3]{Gong95} proves a relation between the zeta regularized product \eqref{e:det_def}
and the Selberg zeta function $Z(s)=Z(s;\chi)$.  Indeed, for $\Re(s)>1$ and $|k|-s\notin\mathbb{Z}_{\geq 0}$ one has that
\begin{equation}\label{e:det_Delta}
\det\left(\Delta_{2k}-s(1-s)\right)
= Z(s) Z_{I}(s) e^{\tilde{c}},
\end{equation}
for an explicitly computed function $ Z_{I}(s)$ and a constant $\tilde c$ which are given below. Namely, for $s\in \mathbb{C}\setminus(-\infty, |k|]$, it is shown that
\begin{align}\label{e:ZI_def}\nonumber
Z_{I}(s) = \exp\bigg\{&\frac{m\omega(\mathcal{F})}{2\pi} \bigg(s\log(2\pi) + s(1-s) + \left(\frac{1}{2}+k\right) \log\Gamma\left(s+k\right)
\\ &+ \left(\frac{1}{2}-k\right) \log \Gamma\left(s-k\right) - \log G\left(s+k+1\right)-\log G\left(s-k+1\right)\bigg)\bigg\},
\end{align}
and the constant $\tilde c$ is expressed as
\begin{align}\label{eq. c tilde}
\widetilde{c}= -m\frac{{\rm vol}(X)}{2\pi}\left(\log\sqrt{2\pi} + \frac{1}{4}-2\zeta'(-1)\right),
\end{align}
with $\zeta$ signifying the Riemann zeta function.

Let us emphasize that in all formulas above, and in the sequel below, we use the principal branch of the logarithm with its argument in
the range $(-\pi , \pi]$ in order to define rational powers of functions.

\section{Distribution of pseudo primes}\label{sec:pseudo_primes}

We will follow the notation as above, so $\Gamma$ is a Fuchsian group of the first kind and $\chi$ an $m\times m$ unitary multiplier system on $\Gamma$.
Let $P$ be a hyperbolic class $P$ in $\Gamma$ with $P=P_{0}^{n}$ for some primitive class $P_{0}$.  Set
$$
\Lambda(P) := \frac{\log N(P_{0})}{1-N(P)^{-1}}.
$$
In this section, we derive an explicit asymptotic formula for the function
$$
\Psi(x,\chi)= \sum_{P\in\Gamma,\, {\rm Tr}(P) >2: \, N(P)\leq x} \Lambda(P)\mathrm{Tr}(\chi(P)),
$$
as $x$ tends to infinity.
We use the notation that $k(1-k)=\lambda_0\leq \lambda_1\leq \ldots$ is the sequence of eigenvalues of the weighted Laplacian $\Delta_{2k}$ with $\lambda_j=s_j(1-s_j)$ for $k\in(0,1)$ and that  $0=\widetilde{\lambda}_0 <\widetilde{\lambda}_1\leq \ldots$ is the sequence of eigenvalues of the Laplacian $\Delta_0$ with $\widetilde{s}_0=1$ and $\lambda_j=\widetilde{s}_j(1-\widetilde{s}_j)$, for $j\geq 1$. We denote by $\mathcal N_k$ the number of eigenvalues of $\Delta_{2k}$ less than or equal to $1/4$, counted with multiplicities.

The main result of this section is the following theorem.

\begin{theorem}\label{th: PGT1} For $x\geq 2$, we have that
$$
  \Psi(x,\chi)= \sum_{\lambda_j\leq 1/4}m(\lambda_j)\frac{x^{s_j}}{s_j} + g(X,\chi)x^{3/4}
$$
where $m(\lambda_j)$ denotes the multiplicity of the small eigenvalue $\lambda_j$ and $$|g(X,\chi)|\leq C_1\frac{m(g-1)}{{\rm sys}(X)} + (\mathcal N_k -1) \left(\frac{C_2}{1-s_1}+C_3 \right) + (\mathcal N_0 -1) \left(\frac{C_4}{1-\widetilde s_1}+C_5 \right),
$$
for some absolute constants $C_1,\, C_2,\, C_3, \, C_4, \, C_5$ independent of $X$ and $\chi$.
\end{theorem}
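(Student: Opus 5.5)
We follow the explicit trace-formula method of \cite{FJK11}, adapted so that no constant depends on the diameter. We first treat the trivial case $\chi=1$, $k=0$, and use it to control the hyperbolic sum for general $(\chi,k)$.

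\textbf{Step 1: an effective Weyl bound.} Insert the heat kernel test function $h(r)=e^{-t(1/4+r^2)}$ into the trace formula \eqref{eq:stf_id1}--\eqref{eq:stf_hyp1} at small $t$, say $t=1$. The identity terms are $O(m\,\mathrm{vol}(X))=O(m(g-1))$. The hyperbolic terms are controlled by the standard packing estimate $\#\{P: \log N(P)\le R\}\ll (g-1)e^{R}/\mathrm{sys}(X)$. This estimate comes from counting lifts in a collar or ball, and it uses the systole instead of the diameter. Positivity of $h$ on the spectrum then gives
\[
\#\{j: |r_j|\le T\}\ll m(g-1)\Big(T^2+\frac{1}{\mathrm{sys}(X)}\Big).
\]

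\textbf{Step 2: smoothed counting function.} Choose a test function $g$ equal to a smoothing of $e^{u/2}\mathbf 1_{[0,\log x]}$ over a window of width $\delta$ in $u$. The hyperbolic side of \eqref{eq:stf_hyp1} then gives a smoothed version of $\Psi(x,\chi)$, up to the harmless factor $N(P)^{1/2}/(N(P)^{1/2}-N(P)^{-1/2})$ versus $\Lambda(P)$. The spectral side gives the main terms $x^{s_j}/s_j$ for $\lambda_j\le 1/4$. The remaining error terms are as follows.
\begin{itemize}
\item Eigenvalues with $r_j$ real contribute $\ll x^{1/2}\sum_j\min(|r_j|^{-1},\,\delta^{-1}|r_j|^{-2})$. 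By Step 1 and partial summation this is $\ll x^{1/2}\,m(g-1)(\log(1/\delta)+1/\mathrm{sys}(X))$.
\item The small eigenvalues $s_j$ with $j\ge1$ carry smoothing errors of size $\delta x^{s_j}$ and derivative terms of size $x^{s_j}/(s_j(1-s_j))$. These produce the $(\mathcal N_k-1)(C_2/(1-s_1)+C_3)$ term, after bounding $x^{s_j}$ by $x$.
\item The identity contributions are $O(m(g-1)x^{1/2})$.
\end{itemize}
Finally we unsmooth. In the trivial case $\chi=1$, $k=0$ the summands are positive, so $\Psi$ is monotone and the smoothed functions at $x$ and at $xe^{\pm\delta}$ sandwich it. This costs $\delta\,\Psi(x)\approx\delta x$. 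Choosing $\delta=x^{-1/4}$ balances this loss against $x^{1/2}/\delta$ and yields exponent $3/4$. This proves the trivial case with constants $C_1,C_4,C_5$.

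\textbf{Step 3: general $\chi$, which we expect to be the main obstacle.} For general $\chi$ the function $\Psi(x,\chi)$ is not monotone, so the sandwich argument fails. Instead we bound the unsmoothing error directly. Since $|\mathrm{Tr}\chi(P)|\le m$, that error is at most
\[
m\big(\Psi_0(xe^{\delta})-\Psi_0(x)\big),
\]
where $\Psi_0$ is the trivial-case counting function. By Step 2, $\Psi_0(xe^\delta)-\Psi_0(x)\ll \delta x+x^{3/4}(\cdots)$. The $(\cdots)$ contains the $\Delta_0$ small-eigenvalue terms, and this is exactly why $\mathcal N_0$ and $\widetilde s_1$ enter the bound. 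With $\delta=x^{-1/4}$ again, all terms become $O(x^{3/4})$ with the stated dependence.

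The delicate points are twofold. First, the smoothing errors must be uniform in $1-s_1$, which may be small. Second, the hyperbolic tail must be bounded using only $\mathrm{sys}(X)$ and $g$, with no dependence on the diameter. Both hinge on the effective Weyl bound in Step 1 together with the choice of a compactly supported smoothing kernel.
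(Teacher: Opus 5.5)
Your plan follows the paper's route. You use a heat-kernel Weyl bound in which the geodesic count enters only through $g$ and ${\rm sys}(X)$, and a smoothed-indicator test function with window $\delta=x^{-1/4}$. You use the monotone sandwich for trivial $\chi$. For general $\chi$, you bound the unsmoothing error by $m$ times an increment of the trivial counting function, which is exactly the paper's argument. However, two of the error estimates in Step~2 are wrong as written, and with them the argument does not give $x^{3/4}$.

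\emph{Tempered spectrum.} Sum $\min(|r_j|^{-1},\delta^{-1}|r_j|^{-2})$ by parts against your bound $\#\{|r_j|\le R\}\ll m(g-1)(R^2+{\rm sys}(X)^{-1})$. The range $|r_j|\le 1/\delta$ alone gives order $m(g-1)/\delta$, not $m(g-1)\log(1/\delta)$. The tail $\delta^{-1}\sum_{|r_j|>1/\delta}r_j^{-2}$ diverges, because the true eigenvalue count grows like $m(g-1)R^2$. If your bullet were right, balancing $\delta x$ against $x^{1/2}\log(1/\delta)$ would give an $O(x^{1/2}\log x)$ prime geodesic theorem, far beyond anything known. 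Also, none of your listed error terms has size $x^{1/2}/\delta$, yet that is the term you balance. The missing input is the decay $|h(r)|\ll x^{1/2}(1+r)^{-1}(1+\delta r)^{-2}$ of \eqref{eq. bound h-eps} (Lemma 4.2 of \cite{FJK11}). With it, the tempered sum is $\ll x^{1/2}m(g-1)(\delta^{-1}+{\rm sys}(X)^{-1})$, which is the real $x^{3/4}$ term.

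\emph{Exceptional eigenvalues.} A term of size $x^{s_j}/(s_j(1-s_j))$, bounded using $x^{s_j}\le x$, is $O(x/(1-s_1))$ and cannot be absorbed into $x^{3/4}$. The non-main part of $h(r_j)$ is instead the reflected term. From $\int_{-T}^{T}2\cosh(u/2)e^{(s_j-1/2)u}\,du=\frac{2\sinh(s_jT)}{s_j}+\frac{2\sinh((1-s_j)T)}{1-s_j}$, it is at most $x^{1-s_j}/(1-s_j)\le x^{1/2}/(1-s_1)$, since $1/2\le s_j\le s_1$. Together with the smoothing error $\delta x^{s_j}\le x^{3/4}$, this is what \eqref{eq. intermediate bound} (Lemma 4.4 of \cite{FJK11}) supplies.

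Further points. In Step~1, the single value $t=1$ cannot produce growth $R^2$. You need $t\asymp R^{-2}$, that is, the heat-trace bound uniformly for small $t$, as in \eqref{eq:Weyl_bound}.

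If, as in the paper, you use the even smooth weight $2\cosh(u/2)$, the hyperbolic side carries $\Lambda(P)(1+N(P)^{-1})$. Your ``harmless factor'' is then $1+N(P)^{-1}$. The extra sum $\sum_{N(P)\le x}\Lambda(P)N(P)^{-1}$ must still be bounded in terms of $g$ and ${\rm sys}(X)$ only, as in \eqref{eq. sum with -1}.

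Finally, for $k\in(1/2,1)$ your claim that the identity contributions are $O(m(g-1)x^{1/2})$ fails. The discrete term \eqref{eq:stf2_id2} equals $m(g-1)(2k-1)\,h(i(k-\tfrac12))$ with $h(i(k-\tfrac12))\asymp x^{k}/k$, which exceeds $x^{3/4}$ once $k>3/4$. The paper's proof asserts that this term is $O(m(g-1))$, so it has the same gap. Neither your argument nor the paper's handles this term. It must be cancelled against the at least $m(2k-1)(g-1)$ holomorphic eigenfunctions at $\lambda_0=k(1-k)$ given by Riemann--Roch. After cancellation, the coefficient of $x^{k}/k$ is $m(\lambda_0)-m(2k-1)(g-1)$ rather than $m(\lambda_0)$.
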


\begin{proof}
We follow \cite{FJK11} (see also \cite{Av21}), which is based on refining the arguments from Randol, described in Chapter XI of \cite{Ch84}. The proof consists of two parts.
First, using the trace formula with the test function $h(r)= e^{-tr^2}$ and $g(u)=\frac{1}{\sqrt{4\pi t}}e^{-\frac{u^2}{4t}}$ for $t>0$, we
deduce an explicit bound for the implied constant in the Weyl's law, meaning the asymptotic count of the eigenvalues of the weighted
Laplacian $\Delta_{2k}$.
Then, using Randol's family of test functions $h_T^\varepsilon(r)$ for large $T>0$ and small $\varepsilon>0$, as defined below,
and its corresponding Fourier transform $g_T^\varepsilon(u)$, when combined with the bound for the aforementioned Weyl's law, we complete the
proof of the theorem.
\medskip

\textbf{Part 1.} Let us substitute  $h(r)= e^{-tr^2}$ and $g(u)=\frac{1}{\sqrt{4\pi t}}e^{-\frac{u^2}{4t}}$ for $t>0$ in the trace formula and
estimate the geometric side.  The contribution from the \emph{identity term} is bounded by
\begin{align*}
\left|\frac{m\omega(\mathcal{F})}{4\pi}\int\limits_{-\infty}^{\infty}re^{-tr^2}\frac{\sinh(2\pi r)}{\cosh(2\pi r) + \cos (2\pi k)}dr\right| &\leq \frac{m\omega(\mathcal{F})}{2\pi}\int\limits_{0}^{\infty}re^{-tr^2}\frac{\sinh(2\pi r)}{\cosh(2\pi r) -1}dr\\&\leq \frac{m\omega(\mathcal{F})}{2\pi}\left(\int\limits_{0}^{1}r\coth(\pi r)dr + 2 \int\limits_{0}^{\infty}re^{-tr^2} \right)\\
&\leq \frac{m\omega(\mathcal{F})}{2\pi}\left(A_1+\frac{1}{t}\right),
\end{align*}
where $A_1= \int\limits_{0}^{1}r\coth(\pi r)dr$ is an absolute constant.  Numerical methods show that $A_{1} < 1$ and, indeed, can be expressed as a
special value of the logarithm and dilogarithm functions.

The term \eqref{eq:stf_id2} is non-zero for $k\in(1/2,1)$, in which case it equals
$$
\frac{m\omega(\mathcal{F})}{2\pi}e^{t(|k|-1/2)^2}.
$$
Therefore, the identity contribution is bounded by
$$
A_{{\rm Id}, \Gamma}:=\frac{m\omega(\mathcal{F})}{2\pi}\left(A_1+\frac{1}{t} + \delta_{k\in(1/2,1)} e^{t(|k|-1/2)^2}\right),
$$
where $\delta_{k\in(1/2,1)} $ is zero unless $k\in(1/2,1)$, in which case it equals $1$.
\medskip

The \emph{hyperbolic} contribution is estimated by using the following (crude) count of the hyperbolic elements of $\Gamma$. Namely, after using the trivial bound $|{\rm Tr}(\chi(P))|\leq m$, we have that
$$
\left|\sum\limits_{P, \text{\rm Tr}(P)>2}\frac{\text{\rm Tr}(\chi(P))\log N(P_{0})}{N(P)^{1/2}-N(P)^{-1/2}}g(\log N(P))\right|\leq m \sum\limits_{P, \text{\rm Tr}(P)>2}\frac{\log N(P_{0})}{N(P)^{1/2}-N(P)^{-1/2}}g(\log N(P)).
$$
We will estimate the sum on the right-hand side using Theorems 4.1.6 and 6.6.4 from \cite{Buser92}. The argument is as follows.

By Theorem 6.6.4 in \cite{Buser92}, there are at most $(g-1)e^{L+6} $ oriented closed geodesics of length $\leq L$
which are not iterates of closed geodesic of length $\leq 2 {\rm arcsinh}\, 1 $.
The number of closed geodesics of length  $\leq L$ which \textit{are}  iterates of a primitive closed geodesic of length $\leq 2 {\rm arcsinh} \, 1 $ can be
bounded by first computing of the number of  primitive closed geodesic of length $\leq 2 {\rm arcsinh} \, 1 $ and multiplying this number by $\frac{L}{{\rm sys}(X)}$,
where ${\rm sys}(X)$ is the length of the systole, i.e. the shortest closed geodesic on $X$.
By Theorem 4.1.6 in \cite{Buser92}, the number of primitive closed geodesic of length $\leq 2 {\rm arcsinh} \, 1 $ is bounded by $3g-3$;
each such close geodesic has length greater than or equal to  ${\rm sys}(X)$.
Therefore, the number of closed geodesics of length  $\leq L$, which \textit{are}  iterates of  primitive closed geodesic of length $\leq 2 {\rm arcsinh} \, 1 $ is bounded by $(3g-3)\frac{L}{{\rm sys}(X)} $.
In sum, we have that the number of closed geodesics $\mathcal N(L)$ on $X$ with length $\leq L$ satisfies the (crude) bound
\begin{equation}\label{eq. crude bound number geod}
\mathcal N(L)\leq (g-1)\left(e^{L+6} + 3\frac{L}{{\rm sys}(X)}\right).
\end{equation}

To continue, recall that the norm of the hyperbolic element $P$ and the length of the corresponding geodesic $\gamma$ are related by $N(P)=e^{\ell(\gamma)}$.
Since,
$$
\frac{\log N(P_{0})}{N(P)^{1/2}-N(P)^{-1/2}}\leq 1,
$$
we get that
$$
\sum\limits_{P, \text{\rm Tr}(P)>2}\frac{\log N(P_{0})}{N(P)^{1/2}-N(P)^{-1/2}}g(\log N(P))\leq \frac{1}{\sqrt{4\pi t}}\sum\limits_{\gamma} e^{-\frac{\ell(\gamma)^2}{4t}},
$$
where the sum on the right-hand side is taken over all closed geodesics $\gamma$.
Choose $\delta>0$ so that $0 < \delta < {\rm sys}(X)$. Then, by using Stieltjes integration
and \eqref{eq. crude bound number geod}, we have that
\begin{align*}
  \sum\limits_{P, \text{\rm tr}(P)>2}\frac{\log N(P_{0})g(\log N(P)) }{N(P)^{1/2}-N(P)^{-1/2}} &\leq \frac{1}{2t\sqrt{4\pi t}}\int\limits_{{\rm sys}(X)-\delta}^{\infty} x \mathcal N(x) e^{-\frac{x^2}{4t}}dx \\
  & \leq \frac{g-1}{2t\sqrt{4\pi t}}\left(e^6\int\limits_{{\rm sys}(X)-\delta}^{\infty} x e^{-\frac{x^2}{4t} + x}dx + \frac{3}{{\rm sys}(X)} \int\limits_{{\rm sys}(X)-\delta}^{\infty} x^2 e^{-\frac{x^2}{4t}}dx \right)  \\
  & \leq \frac{(g-1)(e^6+3)}{2t\sqrt{4\pi t}({\rm sys}(X)-\delta)}\int\limits_{-\infty}^{\infty} x^2 e^{-\frac{x^2}{4t}+x}dx.\\
  &=  \frac{(g-1)(e^6+3)}{{\rm sys}(X)-\delta}(1+2t)e^t.
\end{align*}
The last equality follows from \cite{GR07}, equation 3.462.8 with $\mu=\frac{1}{4t}>0$ and $\nu=\frac{1}{2}$.

From the trace formula and the above
bounds for the identity and hyperbolic contributions, and by taking $\delta= {\rm sys}(X)/2$, we get for 
$t$ in the range $0 < t \leq 4$ that
\begin{align}\label{eq:Weyl_bound}\nonumber
\sum\limits_{n\geq 0}e^{-tr_n^2} &\leq m(2g-2)\left( A_1+\frac{1}{t}+\delta_{k\in(1/2,1)} e^{t(|k|-1/2)^2}\right) +  \frac{2m(g-1)(e^6+3)}{{\rm sys}(X)}(1+2t)e^t\\& \leq  \frac{mB(g-1)}{{\rm sys}(X)}\frac{1}{t},
\end{align}
for some constant $B$ which is explicitly computable and independent of $X$.

Let us now proceed as in \cite{FJK11}.  For any $\lambda >0$, set
$$
\mathbf{N}_k(\lambda)=\# \{\lambda_j: \lambda_j\leq \lambda\}.
$$
Then for $t\in(0,4]$ we have that
$$
\int\limits_0^\infty e^{-t\lambda} d\mathbf{N}_k(\lambda) =e^{-\frac{t}{4}}\sum\limits_{n\geq 0}e^{-tr_n^2} \leq \frac{2mB(g-1)}{{\rm sys}(X)}\frac{1}{t}.
$$
Using Karamata's Tauberian theorem we deduce that
\begin{equation}\label{eq. number eigenv trivial bound 1}
\mathbf{N}_k(\lambda)\leq \frac{8mB(g-1)}{{\rm sys}(X)}\lambda;
\end{equation}
for all $\lambda > 0$; see, for example, Lemma 1.1 of \cite{FJK11}.
Finally, let us note that a better bound for the number of small eigenvalues when $k=0$ is proved in \cite{OR09}.
Namely, it is shown in \cite{OR09} that
\begin{equation}\label{eq. number eigenv trivial bound}
\mathbf{N}_0(1/4)\leq 2g-2.
\end{equation}

\medskip

\textbf{Part 2.} Let us start with the smooth non-negative function
$$
\varphi(x)=\frac{1}{c_0}\left\{
                                    \begin{array}{ll}
                                      \exp\left(\frac{1}{x^2-1}\right), & |x|<1 \\
                                      0, & |x|\geq 1,
                                    \end{array}
                                  \right.
$$
where $c_0=\int\limits_{-1}^1 \exp\left(\frac{1}{x^2-1}\right) dx >0$ is the normalizing constant
so then the integral of $\varphi(x)$ over $\mathbb R$
equals $1$.
For any $\varepsilon>0$ set $\varphi_\varepsilon (x):=\frac{1}{\varepsilon}\varphi(x/\varepsilon)$.
For any $T>0$, let $I_{T}$ be the characteristic function of the interval $[-T,T]$.
For $x\in\mathbb R$, define the function
$$
g_T^\varepsilon(x):=(2 \cosh(x/2))(I_T\ast\varphi_\varepsilon)(x),
$$
where
$$
(I_T\ast\varphi_\varepsilon)(x):=\int\limits_{-\infty}^{\infty}I_T(x-y)\varphi_\varepsilon(y)dy
$$
is the convolution of $I_T$  with $\varphi_\varepsilon$.
The corresponding function $h_T^\varepsilon(y)$, which is the inverse Fourier transform of $g_T^\varepsilon$,
is then given by
\begin{align}
h_T^\varepsilon(y) =2\pi \left( \frac{\sin(T(y-i/2))}{(y-i/2)}\widehat{\varphi_\varepsilon}(y-i/2) + \frac{\sin(T(y+i/2))}{(y+i/2)}\widehat{\varphi_\varepsilon}(y+i/2)\right) ;\label{eq. h eps defn}
\end{align}
see Section 4.2 of \cite{FJK11}.
From Lemma 4.2 of \cite{FJK11}, we have for all $r\geq 0$ and all $0<\varepsilon \leq 1$,
the bound
\begin{equation}\label{eq. bound h-eps}
  |h_T^\varepsilon(r)|\leq ce^{T/2}(1+r)^{-1}(1+\varepsilon r)^{-2}
\end{equation}
for some universal constant $c$. From \eqref{eq. bound h-eps} and \eqref{eq. h eps defn}
we see that $h_T^\varepsilon$ satisfies the required assumptions posed for a test function to be used in the trace formula.
In doing so, we obtain the following identity that
\begin{align}\label{eq:stf2_id1}
\sum\limits_{\lambda_n\leq 1/4} h_T^\varepsilon(r_{n})+ \sum\limits_{\lambda_n>1/4} h_T^\varepsilon(r_{n}) &= \frac{m\omega(\mathcal{F})}{4\pi}\int\limits_{-\infty}^{\infty}rh_T^\varepsilon(r)\frac{\sinh(2\pi r)}{\cosh(2\pi r) + \cos (2\pi k)}dr
\\& \label{eq:stf2_id2} +\frac{m\omega(\mathcal{F})}{2\pi}\sum\limits_{0 \leq l < \vert k \vert -1/2}(\vert k \vert - l - 1/2)h_T^\varepsilon(i(\vert k \vert -l - 1/2))
\\&\label{eq:stf2_hyp1} +\sum\limits_{P, \text{\rm Tr}(P)>2}\text{\rm Tr}(\chi(P))\frac{\log N(P_{0})}{N(P)^{1/2}-N(P)^{-1/2}}g_T^\varepsilon(\log N(P)).
\end{align}

\medskip
\textit{From this point on, we take $T=\log x$ for some $x>{\rm sys}(X)$ and set $\varepsilon=e^{-T/4}$}.
\medskip

Let us study the various terms in \eqref{eq:stf2_id1} through \eqref{eq:stf2_hyp1}, and we
begin with the left-hand side of \eqref{eq:stf2_id1}. Let $\mathcal N_k= \mathcal N(X,k)$ denote the number of small eigenvalues $\lambda_j \leq 1/4$ of $\Delta_k$, counted with their multiplicities; recall that $\Delta_0$ is the Laplacian on $X$
which acts on smooth functions. We let $k(1-k)<\lambda_1=s_1(1-s_1)$. Lemma 4.4 of \cite{FJK11} holds in our case and yields that
\begin{equation}\label{eq. intermediate bound}
\left|\sum\limits_{\lambda_n\leq 1/4} h_T^\varepsilon(r_{n})-\sum\limits_{\lambda_n\leq 1/4} m(\lambda_n) \frac{e^{s_nT}}{s_n}\right| \leq \left( c_1+ (\mathcal N_k -1) \left(\frac{c_2}{1-s_1}+c_3\right)\right)e^{3T/4},
\end{equation}
where constants $c_1,\,c_2,\,c_3$ are absolute.
The sum over eigenvalues $\lambda_n>1/4$ in \eqref{eq:stf2_id1} is estimated using \eqref{eq. number eigenv trivial bound 1}. Specifically,
let us write
$$
\sum\limits_{\lambda_n > 1/4} h_T^\varepsilon(r_{n})=\int\limits_0^\infty h_T^\varepsilon(r) d\mu_k(r),
$$
where
$$
\mu_k(r)=\# \{r_n:0\leq r_n\leq r\}=\mathbf{N}_k(r^2+1/4).
$$
From \eqref{eq. number eigenv trivial bound 1}, we have that
$$
\mathbf{N}_k(r^2+1/4)\leq \frac{8mB(g-1)}{{\rm sys}(X)}(r^2+1/4).
$$
Using \eqref{eq. bound h-eps} and proceeding as in the proof of Lemma 4.3 of \cite{FJK11}, we immediately deduce that
$$
\left|\sum\limits_{\lambda_n > 1/4} h_T^\varepsilon(r_{n})\right|\leq \frac{mb(g-1)}{{\rm sys}(X)}e^{3T/4},
$$
for some absolute constant $b$. This, combined with \eqref{eq. intermediate bound} yields a bound for both terms in
the spectral side of the trace formula \eqref{eq:stf2_id1},
namely that
\begin{align}\label{eq. spect side est}\nonumber
&\left|\sum\limits_{\lambda_n \leq 1/4} h_T^\varepsilon(r_{n}) + \sum\limits_{\lambda_n > 1/4} h_T^\varepsilon(r_{n})- \sum\limits_{\lambda_n\leq 1/4} m(\lambda_n) \frac{e^{s_nT}}{s_n}\right|  \\ & \hskip 10mm \leq
\left(\frac{mb(g-1)}{{\rm sys}(X)} + c_1+ (\mathcal N_k -1) \left(\frac{c_2}{1-s_1}+c_3\right) \right)e^{3T/4}.
\end{align}

Now, we bound the identity contribution, which is the right-hand-side of  \eqref{eq:stf2_id1} together with \eqref{eq:stf2_id2}.
Trivially, the contribution \eqref{eq:stf2_id2}, if non-zero, is bounded by an absolute constant multiplied with $m(2g-2)$. The contribution of the
right-hand-side of \eqref{eq:stf2_id1} can be estimated using \eqref{eq. bound h-eps}, thus giving
\begin{align}\nonumber
\left|\int\limits_{-\infty}^{\infty}h_T^\varepsilon(r)\frac{r\sinh(2\pi r)}{\cosh(2\pi r) + \cos (2\pi k)}dr\right| &\leq 2 ce^{T/2} \bigg(\int\limits_0^1\frac{r \sinh(2\pi r)}{\cosh(2\pi r) -1}dr + \int\limits_1^\infty \frac{1}{(1+\varepsilon r)^2}\frac{\sinh(2\pi r)}{\cosh(2\pi r) -1}dr\bigg)\\ &\leq 2 ce^{T/2}\left(c_4 + \frac{2}{\epsilon} \int\limits_0^\infty \frac{dy}{(1+y)^2}\right) \nonumber\\
&\leq c_5 e^{3T/4},
\label{eq. identity bound}
\end{align}
for some absolute constant $c_5$.  When combining, we have that the contribution from
identity terms of \eqref{eq:stf2_id1} and \eqref{eq:stf2_id2}  is bounded by $mc_6(g-1)e^{3T/4}$ for some absolute constant $c_6$.

It is remains to analyze the hyperbolic contribution \eqref{eq:stf2_hyp1}.  We will conduct the analysis in two steps.
First, assume that $k=0$, $m=1$, in which case $\chi={\rm Id}_1$ is the identity operator and $\text{\rm Tr}(\chi(P))=1$
for all $P$. In this case, we may proceed analogously as in Section 4.4. of \cite{FJK11}. Set
$$
H_\epsilon(T,{\rm Id}_1):= \sum\limits_{P, \text{\rm Tr}(P)>2}\frac{\log N(P_{0})}{N(P)^{1/2}-N(P)^{-1/2}}g_T^\varepsilon(\log N(P)).
$$
Observe that
\begin{equation}\label{eq. H eps bound 1}
H_\varepsilon(T-\varepsilon,{\rm Id}_1)\leq H_\epsilon(T,{\rm Id}_1) \leq H_\varepsilon(T+\varepsilon,{\rm Id}_1)
\end{equation}
and
\begin{equation}\label{eq. H eps bound 2}
H_\varepsilon(T-\varepsilon,{\rm Id}_1)\leq \Psi(e^T,{\rm Id}_1) + \sum_{N(P)\leq e^T} \frac{\log N(P_{0}) N(P)^{-1}}{1-N(P)^{-1}} \leq H_\varepsilon(T+\varepsilon,{\rm Id}_1).
\end{equation}
We note that the term in the middle was denoted in \cite{FJK11} by $H(T)$, and the above inequality actually follows from
arguments in Chapter XI of \cite{Ch84}).

From the trace formula \eqref{eq:stf2_id1}--\eqref{eq:stf2_hyp1} and bounds \eqref{eq. intermediate bound}--\eqref{eq. identity bound} with $\chi={\rm Id}_1$ we deduce that
\begin{align}\label{eq. H eps bound 3}\nonumber
&\left|H_\varepsilon(T\pm\varepsilon,{\rm Id}_1)  - \sum\limits_{\widetilde\lambda_n\leq 1/4} m(\widetilde\lambda_n) \frac{e^{\widetilde s_n(T\pm\varepsilon)}}{\widetilde s_n}\right|\\& \hskip 10mm \leq \left(\frac{c_7(g-1)}{{\rm sys}(X)} + (\mathcal N_0 -1) \left(\frac{c_2}{1-\widetilde s_1}+c_3\right) \right)e^{3(T\pm \varepsilon)/4},
\end{align}
for some absolute constant $c_7$. Therefore,
$$
\left|H_\varepsilon(T+\varepsilon,{\rm Id}_1)  - H_\varepsilon(T-\varepsilon,{\rm Id}_1)\right|\leq  \left(\frac{c_8(g-1)}{{\rm sys}(X)} + (\mathcal N_0 -1) \left(\frac{c_9}{1-\widetilde s_1}+c_{10}\right) \right)e^{3T/4}
$$
for some absolute constants $c_8,\, c_9,\, c_{10}$. From \eqref{eq. H eps bound 1}--\eqref{eq. H eps bound 3} we deduce that
\begin{align*}
&\left|\Psi(e^T,{\rm Id}_1) + \sum_{N(P)\leq e^T} \frac{\log N(P_{0}) N(P)^{-1}}{1-N(P)^{-1}} - H_\varepsilon(T,{\rm Id}_1) \right|
\\ & \hskip 10mm \leq \left(\frac{c_8(g-1)}{{\rm sys}(X)} + (\mathcal N_0 -1) \left(\frac{c_9}{1-\widetilde s_1}+c_{10}\right) \right)e^{3T/4}.
\end{align*}
Therefore,
\begin{align}\label{eq. pre-final for id}\nonumber
\left|\Psi(e^T,{\rm Id}_1) -\sum\limits_{\widetilde \lambda_n\leq 1/4} m(\widetilde \lambda_n) \frac{e^{\widetilde s_n T}}{\widetilde s_n}  \right|& \leq  \left(\frac{c_8(g-1)}{{\rm sys}(X)} + (\mathcal N_0 -1) \left(\frac{c_9}{1-\widetilde s_1}+c_{10}\right) \right)e^{3T/4} \\&+ \sum_{N(P)\leq e^T} \frac{\log N(P_{0}) N(P)^{-1}}{1-N(P)^{-1}}.
\end{align}
Finally, what is left is to estimate the last sum on the right-hand side of \eqref{eq. pre-final for id}. We do so
using the crude bound \eqref{eq. crude bound number geod}, which gives that
\begin{align}\label{eq. sum with -1}
\underset{N(P)\leq e^T}{\sum_{P, \mathrm{tr}(P)>2}} \frac{\log N(P_{0})N(P)^{-1}}{1-N(P)^{-1}}
&\leq  \underset{\ell(\gamma)\leq T}{\sum_{\gamma, \ell(\gamma)\leq T}} \frac{\ell(\gamma_0)e^{-\ell(\gamma)}}{1-e^{-\mathrm{sys}(X)}} \\
&\leq \frac{1}{1-e^{-\mathrm{sys}(X)}} \int_{\mathrm{sys}(X) -\delta}^{T} x e^{-x}d \mathcal N(x) \nonumber \\
&\leq (g-1) \left(c_{11} T^2 + \frac{c_{12}}{{\rm sys}(X)}T\right) \nonumber\\&\leq (g-1)c_{13}\left(1+\frac{1}{{\rm sys}(X)} \right)e^{3T/4},\nonumber
\end{align}
for some absolute constant $c_{13}$.
Thus, Theorem \ref{th: PGT1} follows, in the case when $\chi={\rm Id}_m$,
by \eqref{eq. pre-final for id} and \eqref{eq. sum with -1}.

Let us now consider Theorem \ref{th: PGT1} for an arbitrary $\chi$, meaning any admissible $k\in(0,1)$.
We have that the hyperbolic contribution to the trace formula equals
\begin{align*}
\sum\limits_{P, \text{\rm Tr}(P)>2}&\text{\rm Tr}(\chi(P))\frac{\log N(P_{0})}{N(P)^{1/2}-N(P)^{-1/2}}g_T^\varepsilon(\log N(P))\\&=\sum\limits_{P, \text{\rm Tr}(P)>2}\text{\rm Tr}(\chi(P))\frac{\log N(P_{0})(1+N(P)^{-1})}{1-N(P)^{-1}} (I_T\ast\varphi_\varepsilon)(\log N(P)).
\end{align*}
Observe that
$$
(I_T\ast\varphi_\varepsilon)(x)-I_T(x)=0
$$
unless $x\in[-T-\varepsilon, -T+\varepsilon] \cup [T-\varepsilon, T+\varepsilon]$ in which case $\vert I_T\ast\varphi_\varepsilon)(x)-I_T(x)\vert \leq 2$.
For $T \geq 1$ we have, using \eqref{eq. sum with -1}, that
\begin{multline}\label{eq. Hyp cont pre final}
\bigg|\sum\limits_{P, \text{\rm Tr}(P)>2, N(P)\leq e^T}\text{\rm Tr}(\chi(P))\frac{\log N(P_{0})(1+N(P)^{-1})}{1-N(P)^{-1}} \\ - \sum\limits_{P, \text{\rm Tr}(P)>2}\text{\rm Tr}(\chi(P))\frac{\log N(P_{0})(1+N(P)^{-1})}{1-N(P)^{-1}} (I_T\ast\varphi_\varepsilon)(\log N(P))\bigg| \\ \leq 2m \underset{e^{T-\varepsilon}\leq N(P)\leq e^{T+\varepsilon}}{\sum_{P, \text{\rm Tr}(P)>2}}\frac{\log N(P_{0})(1+N(P)^{-1})}{1-N(P)^{-1}} \\ \leq 2m(\Psi(e^{T+\varepsilon},{\rm Id}_1)- \Psi(e^{T-\varepsilon},{\rm Id}_1)) + m(g-1)c_{13}\left(1+\frac{1}{{\rm sys}(X)} \right)e^{3T/4}.
\end{multline}
Note that $ -T+\varepsilon=  -T+e^{-T/4}<0$, which implies that the sum over $P$ with $e^{-T-\varepsilon}\leq N(P)\leq e^{-T+\varepsilon}$ is empty.

From above, Theorem \ref{th: PGT1} holds for $\Psi(e^{T\pm\varepsilon},{\rm Id}_1)$.  Hence,
\begin{align}\label{eq:id_bound_one}\nonumber
\Psi(e^{T+\varepsilon},{\rm Id}_1) &- \Psi(e^{T-\varepsilon},{\rm Id}_1)
\\ &\leq 2\varepsilon\sum\limits_{\widetilde \lambda_n\leq 1/4} m(\widetilde \lambda_n) e^{\widetilde s_n (T-\varepsilon) }+ \left(\frac{c_{14}(g-1)}{{\rm sys}(X)} + (\mathcal N_0 -1) \left(\frac{c_{15}}{1-\widetilde s_1}+c_{16}\right) \right)e^{3T/4}.
\end{align}
Note that $s_n\leq 1$, and recall the bound \eqref{eq. number eigenv trivial bound} for the number of small eigenvalues.
Since $\varepsilon=e^{-T/4}$, \eqref{eq:id_bound_one} becomes the bound
$$
\Psi(e^{T+\varepsilon},{\rm Id}_1)- \Psi(e^{T-\varepsilon},{\rm Id}_1)\leq \left(\frac{c_{14}(g-1)}{{\rm sys}(X)} + (\mathcal N_0 -1) \left(\frac{c_{15}}{1-\widetilde s_1}+c_{17}\right) \right)e^{3T/4},
$$
for some absolute constants $c_{14}$, $c_{15}$ and $c_{17}$. By inserting this inequality into \eqref{eq. Hyp cont pre final} we conclude that
\begin{multline*}
\bigg| \Psi(e^T,\chi) - \sum\limits_{P, \text{\rm tr}(P)>2}\text{\rm tr}(\chi(P))
\frac{\log N(P_{0})(1+N(P)^{-1})}{1-N(P)^{-1}}g_T^\varepsilon(\log N(P))\bigg| \\ \hskip 5mm
\leq \left(\frac{mc_{14}(g-1)}{{\rm sys}(X)} + (\mathcal N_0 -1) \left(\frac{c_{15}}{1-\widetilde s_1}+c_{17}\right) \right)e^{3T/4}
+\sum_{N(P)\leq e^T} m\frac{\log N(P_{0}) N(P)^{-1}}{1-N(P)^{-1}} .
\end{multline*}
Then, combining with \eqref{eq. sum with -1}, we have that
\begin{align}\label{eq:id_bound_two}
&\bigg| \Psi(e^T,\chi) - \sum\limits_{P, \text{\rm tr}(P)>2}\text{\rm tr}(\chi(P))\frac{\log N(P_{0})(1+N(P)^{-1})}{1-N(P)^{-1}}g_T^\varepsilon(\log N(P))\bigg| \\ & \hskip 7mm \leq
\left(\frac{mc_{18}(g-1)}{{\rm sys}(X)} + (\mathcal N_0 -1) \left(\frac{c_{15}}{1-\widetilde s_1}+c_{17}\right) \right)e^{3T/4} .
\end{align}
Next, we use the bounds \eqref{eq. spect side est} for the spectral contribution and \eqref{eq. identity bound} for the identity contribution
in the trace formula \eqref{eq:stf2_id1}--\eqref{eq:stf2_hyp1} so then \eqref{eq:id_bound_two} reduces to
\begin{equation}\label{eq. penultimate bound}
\left| \Psi(e^T,\chi) - \sum\limits_{\lambda_n\leq 1/4}   m(\lambda_n) \frac{e^{s_nT}}{s_n}\right| \leq C(X,\chi)e^{3T/4},
\end{equation}
where
$$
C(X,\chi) = \left( \frac{mc_{19}(g-1)}{{\rm sys}(X)} + (\mathcal N_0 -1) \left(\frac{c_{15}}{1-\widetilde s_1}+c_{17}\right) + (\mathcal N_k -1) \left(\frac{c_2}{1-s_1}+c_3 \right) \right)
$$
for some absolute constant $c_{19}$.
Using the notation that $x=e^T$ in \eqref{eq. penultimate bound}, we finally get that
$$
\left|\Psi(x, \chi)-  \sum\limits_{\lambda_n\leq 1/4}m(\lambda_n) \frac{x^{s_n}}{s_n} \right| \leq C(X,\chi) x^{3/4},
$$
where the constant of $C(X,\chi)$ depends on the following: genus $g$, dimension $m$, the number of small eigenvalues $\mathcal N_0$ of $\Delta_0$ counted with
multiplicities, and the number of small eigenvalues $\mathcal N_k$ of $\Delta_{2k}$ counted with multiplicities.  Additionally, the dependence on the
the first eigenvalues $\Delta_0$ and $\Delta_{2k}$ is given explicitly in the statement of the theorem.

With all this, the proof of Theorem \ref{th: PGT1} is complete.
\end{proof}

\section{A uniform prime geodesic theorem}\label{sec:uniform_PGT}

Following the approach established in the proof of Theorem 2 from \cite{WX25}, which is motivated by \cite{BP22},
it is possible to prove a uniform version of Theorem \ref{th: PGT1} with an error term that has worse bound in $T$ but
depends solely on the geometry of the underlying surface.  The result we obtain is the following.

\begin{theorem} \label{th. PGT2}  With the notation as above, there is a universal constant $x_{0}$
such that for $x\geq x_{0}$,
$$
  \Psi(x,\chi)= \sum_{\lambda_j\leq 1/4}m(\lambda_j)\frac{x^{s_j}}{s_j} + \mathcal G(x; X,\chi)
$$
where $m(\lambda_j)$ denotes the multiplicity of the small eigenvalue $\lambda_j$ and
$$
|\mathcal G(x; X,\chi)|\leq \mathcal C_1mg x^{5/6} + \mathcal C_2 mg x^{2/3} \max\left\{ 0, \log\left(\frac{1}{{\rm sys}(X) x^{1/6}}\right)\right\},
$$
for some absolute constants $\mathcal C_1,\, \mathcal C_2$ independent of $X$ and $\chi$.
\end{theorem}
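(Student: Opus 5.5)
We follow the scheme of Theorem \ref{th: PGT1}: apply the trace formula \eqref{eq:stf2_id1}--\eqref{eq:stf2_hyp1} with Randol's test pair $(h_T^\varepsilon,g_T^\varepsilon)$ and $T=\log x$. The one change is in how the spectral and hyperbolic error terms are controlled. Instead of the Weyl bound \eqref{eq. number eigenv trivial bound 1}, whose constant grows like $1/{\rm sys}(X)$, we use bounds depending only on $g$ and $m$, in the spirit of \cite{WX25}. We also take $\varepsilon=x^{-1/6}$ rather than $x^{-1/4}$, which is the source of the exponents $5/6$ and $2/3$.

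\textbf{Trivial case $\chi={\rm Id}_1$, $k=0$.} Following \cite{WX25}, the spectral side is first split. The small eigenvalues give $\sum m(\widetilde\lambda_n)x^{\widetilde s_n}/\widetilde s_n$ up to an error controlled by \eqref{eq. number eigenv trivial bound}. For the eigenvalues with $\lambda_n>1/4$ we need a Weyl-type bound uniform in the systole. We plan to obtain it from the heat-kernel trace as in Part 1, but now estimate the hyperbolic contribution by the thick--thin decomposition. The thick part gives at most $C(g-1)e^{L}$ geodesics of length $\le L$ (Theorem 6.6.4 of \cite{Buser92}). The thin part consists of iterates of the at most $3g-3$ short geodesics. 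For a short geodesic of length $\ell$, the sum over its iterates of $\frac{\ell}{2\sinh(j\ell/2)}\,e^{-(j\ell)^2/4t}/\sqrt{4\pi t}$ is bounded by a Riemann sum, hence by $O(1/\sqrt t)$ uniformly in $\ell$ apart from a logarithmic term. The outcome should be a count of the form $\#\{r_n\le R\}\ll mg(1+R)^2$ plus a term like $mg\,R\log(1/{\rm sys})$, with no $1/{\rm sys}$ factor.

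Inserting this count into \eqref{eq. bound h-eps} bounds the large-eigenvalue sum by roughly $mg\,e^{T/2}\varepsilon^{-1}$ together with a log-systole term. With $\varepsilon=e^{-T/6}$ the first part is $mg\,x^{2/3}$. The identity term is handled exactly as in \eqref{eq. identity bound} and gives $mg\,x^{1/2}\varepsilon^{-1}$. The smoothing error $H_\varepsilon(T+\varepsilon)-H_\varepsilon(T-\varepsilon)$ is of size $\varepsilon x$ times the main term, which is $mg\,x^{5/6}$ and is the dominant error. The sum $\sum_{N(P)\le x}\log N(P_0)N(P)^{-1}/(1-N(P)^{-1})$ is controlled without $1/{\rm sys}$ by the same thin-part Riemann-sum argument: each short geodesic contributes $\sum_{j\ell\le T}\ell/(1-e^{-j\ell})\ll T+\log(1/\ell)$.

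\textbf{General $\chi$.} Here we invoke Hejhal's lemma \eqref{eq:hejhal_bound}: $\Psi(x,\chi)$ equals the real-valued sum with $\mathrm{Tr}(\chi(P))$ replaced by $\Re\,\mathrm{Tr}(\chi(P))$. The quantity $m-\Re\,\mathrm{Tr}\chi(P)$ is then nonnegative, so the sandwich inequalities \eqref{eq. H eps bound 1}--\eqref{eq. H eps bound 2} apply to the weights $m\pm\Re\,\mathrm{Tr}\chi(P)$ as well. We therefore write the real series \eqref{eq:real_series} as a combination of the trivial-case series and the nonnegative series, and apply the trace formulas for trivial $\chi$ and for $\chi$ (with weight $k$). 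The boundary terms are then bounded by $m$ times the trivial-case window estimate, which gives $mg\,x^{5/6}$.

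\textbf{Where the log-systole term arises, and the main obstacle.} The term $x^{2/3}\log\big(1/({\rm sys}(X)x^{1/6})\big)$ should come from short geodesics with $\ell<\varepsilon$. For these, the iterates in a window of width $2\varepsilon$ number about $\varepsilon/\ell$, and summing the contributions over such geodesics produces the logarithm. The key difficulty is exactly this step: producing a Weyl/geodesic count that is uniform in the systole, and then tracking how the very short geodesics interact with the window of width $\varepsilon$. Everything else is the bookkeeping already carried out in Theorem \ref{th: PGT1}. Finally, $x_0$ is chosen so that all lower-order terms, such as $T^2\ll x^{2/3}$, are absorbed into the stated bound.
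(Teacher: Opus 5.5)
Your argument is viable, but it takes a genuinely different route from the paper's. The paper does not use Randol's kernel here. Following \cite{WX25}, it uses $\varphi^{\pm}_{\tau,\epsilon}=\frac12\bigl(f_\epsilon(\cdot-\tau)+f_\epsilon(\cdot+\tau)\bigr)\pm f_\epsilon$ with $f_\epsilon=\eta_\epsilon*\eta_\epsilon$, whose transforms $(\cos(\tau r)\pm1)\widehat\eta(\epsilon r)^2$ have a fixed sign for real $r$. So all eigenvalues $\lambda_j>1/4$ are simply discarded in the upper and lower bounds, no eigenvalue count above $1/4$ is needed, and $\Psi$ is recovered by integrating in $\tau$ with $\epsilon=x^{-1/6}$. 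In that scheme the logarithm comes from the geometric side near $u=0$, where $\varphi^-_{\tau,\epsilon}\ge-2/\epsilon$. There the iterates $j\ell\le\epsilon$ of a geodesic with $\ell<\epsilon$ have weights $\approx 1/j$, which gives $\epsilon^{-1}e^{\tau/2}\sum_{\ell(\gamma)<\epsilon}\log(\epsilon/\ell(\gamma))$. For general $\chi$ the paper needs positivity, which it gets from Hejhal's lemma via $\Psi(\cdot,{\rm Id}_1)+c\Psi(\cdot,\chi)$ with $0<c<1/m$.

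Your two-sided kernel forces you instead to prove a systole-uniform Weyl bound, and your sketch of it works. Take the heat trace for $t\le 4$. Use Theorem 6.6.4 of \cite{Buser92} for the thick part and $\ell/(2\sinh(j\ell/2))\le 1/j$ for iterates of the at most $3g-3$ short geodesics. Then apply $N(\lambda)\le e^{t\lambda}\Theta(t)$ at $t=1/\lambda$. This gives $\mu_k(R)\ll mg(1+R)^2+mg(1+R)\log(2+1/{\rm sys}(X))$, hence $\sum_{\lambda_n>1/4}|h_T^\varepsilon(r_n)|\ll mg\,e^{T/2}\bigl(\varepsilon^{-1}+\log(1/\varepsilon)\log(2+1/{\rm sys}(X))\bigr)$. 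In exchange, the twisted case needs no positivity: the window argument of Theorem~\ref{th: PGT1} only uses $|\mathrm{Tr}\,\chi(P)|\le m$, so Hejhal's lemma is not needed. Your log term $x^{1/2}\log x\,\log(2+1/{\rm sys}(X))$ is also slightly better than the stated one and is absorbed into it for $x\ge x_0$.

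Three corrections are needed.

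First, your account of where the logarithm comes from transplants the paper's mechanism to the wrong place. Iterates of a short geodesic lying in $[T-\varepsilon,T+\varepsilon]$ have $\Lambda(P)\approx\ell$ and contribute only $O(\varepsilon)$ in total. In your scheme the logarithm enters through the collar term of the eigenvalue count. It also enters through $\sum_{N(P)\le x}\Lambda(P)N(P)^{-1}$, which contributes about $\log(1/\ell)$ per short geodesic.

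Second, \eqref{eq. number eigenv trivial bound} does not cover $\Delta_{2k}$, and your Weyl bound only gives $\mathcal N_k\ll mg\log(2+1/{\rm sys}(X))$. The smoothing error $\mathcal N_k\varepsilon x$ is then not of the stated form; the paper tacitly uses $\mathcal N_k\ll mg$ at this point. The remedy is to bound the error by $\varepsilon\sum m(\lambda_j)x^{s_j}$. Then use the positivity $h_T^\varepsilon(r_j)\ge(e^{s_j(T-\varepsilon)}-1)/s_j$ on the small spectrum, together with the a priori bound $|\text{hyperbolic side}|\le m\,H_\varepsilon(T,{\rm Id}_1)\ll mgx$ from the trace formula. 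The leftover $O(\mathcal N_k x^{1/2}\log x)$ is harmless.

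Third, there is a point neither you nor the paper handles. For $k\in(1/2,1)$ the discrete identity term \eqref{eq:stf2_id2} is not $O(mg)$. With $r_0=i(k-\tfrac12)$ it equals $m(2k-1)(g-1)h_T^\varepsilon(r_0)\approx m(2k-1)(g-1)x^k/k$, which has exactly the shape of the $\lambda_0$-term. It is absorbed into $mgx^{5/6}$ only for $k\le 5/6$; for $k\in(5/6,1)$ the main term must be corrected. Compare the limiting case $k=1$: there $\lambda_0=0$ has multiplicity $g$, while $\Psi(x,{\rm Id}_1)\sim x$.
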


\begin{proof}
Our proof follows the method which proves \cite[Theorem 2]{WX25}, with some minor modifications due to two facts.
First, we are counting $\Lambda(P)\mathrm{Tr}(\chi(P))$ over all hyperbolic elements $P$ with norm $\leq x$ instead
of counting $1$ over all primitive geodesics; and second, as a consequence, the corresponding counting function is not necessarily increasing.

We will begin by describing how to prove Theorem \ref{th. PGT2} when $\chi$ is the identity one-dimensional representation.  Afterwards,
we will use this result to complete the proof in general case. As in the proof of Theorem \ref{th: PGT1}, out starting point
is the trace formula with $k=0$ and $m=1$.

Let $\eta(x)$ be a non-negative, smooth, even function with support $\text{supp}(\eta) \subset (-\frac{1}{2}, \frac{1}{2})$, total integral
$\int_{\mathbb{R}} \eta(x)dx = 1$, and with $\eta(0) = \max_{x \in \mathbb{R}} \eta(x) = 2$. Set $\eta_{\epsilon}(x) = \frac{1}{\epsilon} \eta(\frac{x}{\epsilon})$. For any $0 < \epsilon < 0.01$ and $L > 1$, we define, as in \cite{WX25},
\begin{equation*}
    f_{\epsilon}(x) = (\eta_{\epsilon} * \eta_{\epsilon})(x).
\end{equation*}
Also, define the functions $\varphi_{L,\epsilon}^{+}$ and $\varphi_{L,\epsilon}^{-}$ by
\begin{equation}\label{eq. start for count WX}
    \varphi_{L,\epsilon}^{\pm}(x) = \frac{1}{2}(f_{\epsilon}(x - L) + f_{\epsilon}(x + L)) \pm f_{\epsilon}(x).
\end{equation}
We shall now prove upper and lower bounds for $ \varphi_{L,\epsilon}^{\pm}(x)$ and their Fourier transforms. As in \cite[Lemma 23]{WX25}, it follows that
$$
\Psi(L,{\rm Id}_1)=2\int_{1-\epsilon}^{L+\epsilon} \sum_{k=1}^\infty\sum_{1<k\ell(\gamma)\leq L}\frac{\ell(\gamma)}{1-e^{-k\ell(\gamma)}}  \varphi_{\tau,\epsilon}^{\pm}(k\ell(\gamma))d\tau.
$$
Consider the trace formula with $k=0$ and test function being $g= \varphi_{\tau,\epsilon}^{\pm}$ for $\tau\in[1-\epsilon, L+\epsilon]$.
In doing so, we obtain the identity that
\begin{align}\label{eq. Trace WX}\nonumber
\sum_{k=1}^{\infty} \sum_{\gamma \in\mathcal P(X)} \frac{\ell(\gamma)}{2 \sinh \frac{k\ell(\gamma)}{2}} \varphi_{\tau,\epsilon}^{\pm}(k\ell(\gamma)) &= \sum_{0 \le \widetilde\lambda_j \le 1/4} \widehat{\varphi}_{\tau,\epsilon}^{\pm}(\widetilde r_j) + \sum_{ \widetilde\lambda_j > 1/4} \widehat{\varphi}_{\tau,\epsilon}^{\pm}(\widetilde r_j) \\&- (g - 1) \int_{\mathbb{R}} r \tanh(\pi r) \widehat{\varphi}_{\tau,\epsilon}^{\pm}(r) dr,
\end{align}
where $\mathcal P(X)$ denotes the set of all primitive geodesics and $\widetilde  s_j=1/2 + \widetilde  r_j$.
The right-hand side of \eqref{eq. Trace WX} is estimated in the same way as in \cite{WX25}.  We note that the bounds one obtains are independent of
the fact that we are interested in the \emph{weighted} distribution function.
In doing so, and by following the proof of Lemma 24 of \cite{WX25}, we find estimates for $\varphi_{\tau,\epsilon}^{-}$ as
\begin{equation} \label{eq. bound - large eigenv}
\sum_{\widetilde\lambda_j > \frac{1}{4}} \widehat{\varphi}_{\tau,\epsilon}^{-}(\widetilde  r_j) \le 0.
\end{equation}
and
\begin{equation}\label{eq. bound - small eigenv}
\sum_{0 \le \widetilde\lambda_j \le \frac{1}{4}} \widehat{\varphi}_{\tau,\epsilon}^{-}(\widetilde  r_j) \le \frac{1}{2} \sum_{0 \le \widetilde\lambda_j \le \frac{1}{4}} e^{(\widetilde  s_j - \frac{1}{2})\tau} + C_{1,\eta}\left(g\epsilon^2 e^{\frac{1}{2}\tau}\right),
\end{equation}
for some constant $C_{1,\eta}$ depending only upon $\eta$.
Analogous estimates for $\varphi_{\tau,\epsilon}^{+}$ are derived using the proof of  Lemma 25 of \cite{WX25}), namely that
\begin{equation} \label{eq. bound + large eigenv}
\sum_{\widetilde\lambda_j > \frac{1}{4}} \widehat{\varphi}_{\tau,\epsilon}^{+}(\widetilde r_j) \ge 0,
\end{equation}
and
\begin{equation} \label{eq. bound + small eigenv}
\sum_{0 \le\widetilde \lambda_j \le \frac{1}{4}} \widehat{\varphi}_{\tau,\epsilon}^{+}(\widetilde  r_j) \ge \frac{1}{2} \sum_{0 \le\widetilde \lambda_j \le \frac{1}{4}} e^{(\widetilde  s_j - \frac{1}{2})\tau} - C_{2,\eta}\left(g\epsilon^2 e^{\frac{1}{2}\tau}\right),
\end{equation}
for some constant $C_{2,\eta}$ depending only upon $\eta$.
Furthermore,
\begin{equation} \label{eq. bound CT}
\left|\int_{\mathbb{R}} r \tanh(\pi r) \widehat{\varphi}_{\tau,\epsilon}^{\pm}(r) dr\right|\leq \frac{C_{3,\eta}}{\epsilon^2}.
\end{equation}
Now, we will modify proofs of Lemma 24 and 25 of \cite{WX25} in which the left-hand side of the trace formula \eqref{eq. Trace WX} is analyzed. For $\varphi_{\tau,\epsilon}^{-}$, we use that $\varphi_{\tau,\epsilon}^{-}(x) \ge 0$ when $x > \epsilon$, and $\varphi_{\tau,\epsilon}^{-}(x) = 0$ when $x \ge \tau + \epsilon$. Moreover, we use the lower bound $\varphi_{\tau,\epsilon}^{-}(x) \ge -\frac{2}{\epsilon}$ combined with the fact that $e^{-x/2}$ is decreasing function to get that
\begin{align*}
\sum_{k=1}^{\infty} &\sum_{\gamma \in\mathcal P(X)} \frac{ \ell(\gamma)}{2 \sinh \frac{k\ell(\gamma)}{2}} \varphi_{\tau,\epsilon}^{-}(k\ell(\gamma)) \\ &\geq e^{-\frac{\tau+\epsilon}{2}} \sum_{k=1}^{\infty} \sum_{\epsilon < k\ell (\gamma)<L} \frac{  \ell(\gamma)}{1-e^{-k\ell(\gamma)}} \varphi_{\tau,\epsilon}^{-}(k\ell(\gamma))-\frac{2}{\epsilon}\sum_{k=1}^{\infty} \sum_{k\ell(\gamma) \leq \epsilon } \frac{ \ell(\gamma)}{2 \sinh \frac{k\ell(\gamma)}{2}}\\
&\geq e^{-\frac{\tau+\epsilon}{2}} \sum_{k=1}^{\infty} \sum_{1 < k\ell (\gamma)<L} \frac{ \ell(\gamma)}{1-e^{-k\ell(\gamma)}} \varphi_{\tau,\epsilon}^{-}(k\ell(\gamma))-C \frac{1}{\epsilon}\left(g+  \sum_{\gamma \in\mathcal P(X), \ell(\gamma)<\epsilon}\log\left(\frac{\epsilon}{\ell(\gamma)}\right) \right),
\end{align*}
for some absolute constant $C$.  We note that the second inequality above was taken from equation (12) of \cite{WX25}; we refer to the arxiv preprint version of \cite{WX25}.  Let us now combine the trace formula \eqref{eq. Trace WX} with the bounds \eqref{eq. bound - large eigenv}, \eqref{eq. bound - small eigenv} and \eqref{eq. bound CT}.  In doing so, we get for $\tau\in[1- \epsilon, L+\epsilon]$ that
\begin{multline}\label{eq. upper bound}
\sum_{k=1}^{\infty} \sum_{1 < k\ell (\gamma)<L} \frac{ \ell(\gamma)}{1-e^{-k\ell(\gamma)}} \varphi_{\tau,\epsilon}^{-}(k\ell(\gamma))
 \\ \leq\frac{1}{2}\sum_{0 \le \widetilde \lambda_j \le \frac{1}{4}} e^{\widetilde s_j \tau}  +  C(\eta)\left(  g\epsilon e^{\tau} + \frac{g}{\epsilon^2}e^{\tau/2} + \frac{e^{\tau/2}}{\epsilon} \sum_{\gamma \in\mathcal P(X), \ell(\gamma)<\epsilon}\log\left(\frac{\epsilon}{\ell(\gamma)}\right) \right),
\end{multline}
for some absolute constant $C(\eta)$ depending only upon $\eta$.
By proceeding analogously as is the proof of \cite[Lemma 25]{WX22}, meaning that we use the trace formula \eqref{eq. Trace WX}  combined with the bound
$ 0 \le \varphi_{L,\epsilon}^{+}(x) \le \frac{4}{\epsilon}$ and bounds \eqref{eq. bound + large eigenv}, \eqref{eq. bound + small eigenv} and \eqref{eq. bound CT} we get that
\begin{multline}\label{eq. lower bound}
\sum_{k=1}^{\infty} \sum_{\epsilon < k\ell (\gamma)<L} \frac{ \ell(\gamma)}{1-e^{-k\ell(\gamma)}} \varphi_{\tau,\epsilon}^{+}(k\ell(\gamma)) \\ \geq \frac{1}{2}\sum_{0 \le \widetilde\lambda_j \le \frac{1}{4}} e^{\widetilde s_j \tau}  -B(\eta)\left(  g\epsilon e^{\tau} + \frac{g(1+\epsilon \tau)}{\epsilon^2}e^{\tau/2} + \frac{e^{\tau/2}}{\epsilon} \sum_{\gamma \in\mathcal P(X), \ell(\gamma)<\epsilon}\log\left(\frac{\epsilon}{\ell(\gamma)}\right) \right),
\end{multline}
for some absolute constant $B(\eta)$ depending only upon $\eta$.

The statement of the theorem for $\chi=\mathrm{Id}_V$ now follows by integrating \eqref{eq. upper bound} and \eqref{eq. lower bound}
with respect to $\tau$ from $1-\epsilon$ to $L+\epsilon$, taking $L$ sufficiently large so that $\epsilon=e^{-L/6}<0.001$ and with $x=e^L$.
For further details, we refer to the proof of Theorem 21 from \cite{WX25}.

Let us point out that at this point we can state a value for $x_{0}$.  Indeed, it is necessary
that $x_{0} = e^{L} = (0.001)^{-6} = 10^{18}$.

Now, let $\chi$ be arbitrary unitary multiplier system. According to Lemma 3.3 on p. 473 of \cite{Hejhal83} for any $x\geq 1$ we have that
\begin{equation}\label{eq:hejhal_bound}
\Psi(x,\chi)=\sum_{P\in\Gamma,\, {\rm Tr}(P) >2: \, N(P)\leq x} \Lambda(P)\mathrm{Re}\left(\mathrm{Tr}(\chi(P))\right).
\end{equation}
Therefore, if we fix a real number $c\in(0,1/m)$, the distribution function
\begin{equation}\label{eq:real_series}
\Psi_c(x,\chi):=\Psi(x,{\rm Id}_1)+c\Psi(x,\chi)= \sum_{P\in\Gamma,\, {\rm Tr}(P) >2: \, N(P)\leq x} \Lambda(P)\left(1+c\mathrm{Re}\left(\mathrm{Tr}(\chi(P))\right)\right)
\end{equation}
is non-decreasing.  As a result, we can apply the same methodology as above to study $\Psi_c(x,\chi)$.
Namely, we start with the identity
$$
\Psi_c(L,\chi)=2\int_{1-\epsilon}^{L+\epsilon} \sum_{k=1}^\infty\sum_{1<k\ell(\gamma)\leq L}\frac{\left(1+c\mathrm{Re}\left(\mathrm{Tr}(\chi(\gamma))\right) \right)\ell(\gamma)}{1-e^{-k\ell(\gamma)}}  \varphi_{\tau,\epsilon}^{\pm}(k\ell(\gamma))d\tau.
$$
The corresponding trace formula, for $\tau\in[1-\epsilon, L+\epsilon]$ for $k \in (0,1)$ gives that
\begin{align}\label{eq. Trace general sum}\nonumber
&\sum_{k=1}^{\infty} \sum_{\gamma \in\mathcal P(X)} \frac{\left(1+c\mathrm{Re}\left(\mathrm{Tr}(\chi(\gamma))\right) \right)\ell(\gamma)}{2 \sinh \frac{k\ell(\gamma)}{2}} \varphi_{ \tau,\epsilon}^{\pm}(k\ell(\gamma)) \\& \nonumber=\sum_{0 \le \widetilde \lambda_j \le 1/4} \widehat{\varphi}_{\tau,\epsilon}^{\pm}(\widetilde r_j) + \sum_{ \widetilde\lambda_j > 1/4} \widehat{\varphi}_{\tau,\epsilon}^{\pm}(\widetilde r_j) - (g - 1) \int_{\mathbb{R}} r \tanh(\pi r) \widehat{\varphi}_{\tau,\epsilon}^{\pm}(r) dr\\&\nonumber + c\bigg(\sum_{0 \le \lambda_j \le 1/4} \widehat{\varphi}_{\tau,\epsilon}^{\pm}(r_j) + \sum_{ \lambda_j > 1/4} \widehat{\varphi}_{\tau,\epsilon}^{\pm}( r_j) -m(g-1) \int\limits_{-\infty}^{\infty}r\widehat{\varphi}_{\tau,\epsilon}^{\pm}(r)\frac{\sinh(2\pi r)}{\cosh(2\pi r) + \cos (2\pi k)}dr \\&
-2m(g-1)\sum\limits_{0 \leq l < \vert k \vert -1/2}(\vert k \vert - l - 1/2)\widehat{\varphi}_{\tau,\epsilon}^{\pm}(i(\vert k \vert -l - 1/2))\bigg).
\end{align}
Observe that $ \widehat{\varphi}_{\tau,\epsilon}^{\pm}(r) = (\cos(\tau r) -1) \widehat{\eta}(\epsilon r)^2$.  As a result,
the identity contribution from the weight $k\in(0,1)$ trace formula can be estimated as
$$
\left| \int_{\mathbb{R}} r \tanh(\pi r) \widehat{\varphi}_{\tau,\epsilon}^{\pm}(r) dr\right|\leq  2\int_{\mathbb{R}} r \coth(\pi r) \widehat{\eta}(\epsilon r)^2 dr\leq \frac{1}{\epsilon^2}A(\eta),
$$
where
$$
A(\eta)=\int_{\mathbb{R}} r \coth(\pi r)\widehat{\eta}(r)^2 dr.
$$
Therefore it remains to estimate the left-hand side of \eqref{eq. Trace general sum}. To do so, we proceed analogously as above, now using that $0<1-cm\leq 1+c\mathrm{Re}\left(\mathrm{Tr}(\chi(\gamma))\right)\leq 1+cm$.   This gives that
\begin{align*}
\sum_{k=1}^{\infty} &\sum_{\gamma \in\mathcal P(X)} \frac{ \left(1+c\mathrm{Re}\left(\mathrm{Tr}(\chi(\gamma))\right) \right) \ell(\gamma)}{2 \sinh \frac{k\ell(\gamma)}{2}} \varphi_{L,\epsilon}^{-}(k\ell(\gamma)) \\ &\geq e^{-\frac{\tau+\epsilon}{2}} \sum_{k=1}^{\infty} \sum_{\epsilon < k\ell (\gamma)<L} \frac{  \left(1+c\mathrm{Re}\left(\mathrm{Tr}(\chi(\gamma))\right) \right)\ell(\gamma)}{1-e^{-k\ell(\gamma)}} \varphi_{L,\epsilon}^{-}(k\ell(\gamma))-\frac{2(1+mc)}{\epsilon}\sum_{k=1}^{\infty} \sum_{k\ell(\gamma) \leq \epsilon } \frac{ \ell(\gamma)}{2 \sinh \frac{k\ell(\gamma)}{2}}\\
&\geq e^{-\frac{\tau+\epsilon}{2}} \sum_{k=1}^{\infty} \sum_{1 < k\ell (\gamma)<L} \frac{\left(1+c\mathrm{Re}\left(\mathrm{Tr}(\chi(\gamma))\right) \right) \ell(\gamma)}{1-e^{-k\ell(\gamma)}} \varphi_{L,\epsilon}^{-}(k\ell(\gamma))-D \frac{1}{\epsilon}\left(g+  \sum_{\gamma \in\mathcal P(X), \ell(\gamma)<\epsilon}\log\left(\frac{\epsilon}{\ell(\gamma)}\right) \right),
\end{align*}
for some absolute constant $D$.
Therefore, we deduce the analogue of the upper bound \eqref{eq. upper bound}, namely that
\begin{align}\label{eq. upper bound 1}\nonumber
\sum_{k=1}^{\infty} &\sum_{1 < k\ell (\gamma)<L} \frac{\left(1+c\mathrm{Re}
\left(\mathrm{Tr}(\chi(\gamma))\right) \right) \ell(\gamma)}{1-e^{-k\ell(\gamma)}} \varphi_{L,\epsilon}^{-}(k\ell(\gamma) \\ &\leq\frac{1}{2}\sum_{0 \le \widetilde \lambda_j \le \frac{1}{4}} e^{\widetilde s_j \tau}  +  \frac{c}{2}\sum_{0 \le \lambda_j \le \frac{1}{4}} e^{s_j \tau}  +  C_c(\eta)m\left(  g\epsilon e^{\tau} + \frac{g}{\epsilon^2}e^{\tau/2} + \frac{e^{\tau/2}}{\epsilon} \sum_{\gamma \in\mathcal P(X), \ell(\gamma)<\epsilon}\log\left(\frac{\epsilon}{\ell(\gamma)}\right) \right),
\end{align}
for some absolute constant $ C_c(\eta)$ depending upon $c$ and $\eta$.

The analogue of the lower bound \eqref{eq. lower bound} is deduced similarly, resulting in the inequality that
\begin{align}\label{eq. lower bound 1}
&\sum_{k=1}^{\infty} \sum_{1 < k\ell (\gamma)<L} \frac{\left(1+c\mathrm{Re}\left(\mathrm{Tr}(\chi(\gamma))\right) \right)   \ell(\gamma)}{1-e^{-k\ell(\gamma)}} \varphi_{L,\epsilon}^{+}(k\ell(\gamma)) \\ &\geq \frac{1}{2}\sum_{0 \le \widetilde\lambda_j \le \frac{1}{4}} e^{\widetilde s_j \tau}+ \frac{c}{2}\sum_{0 \le \lambda_j \le \frac{1}{4}} e^{s_j \tau} -B_c(\eta)m\left(  g\epsilon e^{\tau} + \frac{g(1+\epsilon \tau)}{\epsilon^2}e^{\tau/2} + \frac{e^{\tau/2}}{\epsilon} \sum_{\gamma \in\mathcal P(X), \ell(\gamma)<\epsilon}\log\left(\frac{\epsilon}{\ell(\gamma)}\right) \right),\nonumber
\end{align}
for some absolute constant $ B_c(\eta)$ depending upon $c$ and $\eta$.

Upon integrating the upper and lower bounds from $1-\epsilon$ to $L+\epsilon$ with respect to $\tau$ and proceeding as above by
taking large $L$ and the same $\epsilon$ and $x$, one gets that
$$
\Psi_c(x,\chi)= \sum_{\widetilde\lambda_j\leq 1/4}m(\widetilde \lambda_j)\frac{x^{\widetilde s_j}}{\widetilde s_j}+ c \sum_{\lambda_j\leq 1/4}m( \lambda_j)\frac{x^{ s_j}}{s_j} + \mathcal G_c(x;X,\chi)
$$
where
$$|\mathcal G_c(x; X,\chi)|\leq \mathcal C_1(c)mg x^{5/6} + \mathcal C_2(c) mg x^{2/3} \max\left\{ 0, \log\left(\frac{1}{{\rm sys}(X) x^{1/6}}\right)\right\},
$$
for some absolute constants  $\mathcal C_1(c)$ and $ \mathcal C_2(c)$.

Finally, the proof of Theorem \ref{th. PGT2} concludes by observing that
$$
\Psi(x,\chi)=\frac{1}{c}\left(\Psi_c(x,\chi) - \Psi(x,{\rm Id}_1) \right).
$$
\end{proof}

\section{Asymptotics of spectral determinants}\label{sec:spectral_dets}

In this section we will use Theorem \ref{th. PGT2} and study spectral determinants.

Let $X_{n}:= \overline{\Gamma}_n\backslash \mathbb{H}$ be
a sequence of compact Riemann surfaces of genus $g_n$ with $\lim_{n\to\infty} g_n=\infty$. This implies that
\begin{equation}\label{eq. assumpt on surface main}
\lim_{n\to\infty}{\rm vol}(X_n)=\lim_{n\to \infty}2\pi (2g_n-2)=\infty.
\end{equation}
We consider the associated sequence of weighted Laplacians $\Delta_{2k_n}$,  with the sequence of eigenvalues $k_n(1-k_n)=\lambda_{0,n}< \lambda_{1,n}< ...$ tending to $+\infty$ with multiplicities $ m(\lambda_{j,n})$ and each with an associated $m\times m$ unitary multiplier system $\chi_n$ of weight
$2k_n\in(0,2)$. We denote by $\mathcal{N}_{k,n}$ the number of eigenvalues $\leq 1/4$ of the weighted Laplacian.  In addition, we assume that the following
two conditions are fulfilled.

\medskip
\begin{itemize}
  \item [(i)]({\bf Weak spectral gap}) There exists a constant $\beta < \infty$ such that
\begin{equation}\label{ass limsup 1}
\limsup_{n\to\infty} \frac{\mathcal N_{k,n}}{\lambda_{1,n}{\rm vol}(X_n)}=\beta.
\end{equation}
    \item [(ii)] ({\bf Uniform discreteness}) Let
\begin{equation}\label{ass limsup 2}
    1<\delta_n = \min\limits_{P\in\Gamma_n,\, P\neq \mathrm{Id}}\{N(P)\}.
\end{equation}
    Then there exists $\eta>1$ such that $\delta_n\geq \eta$ for all $n\geq 1$.
\end{itemize}

\medskip
\begin{remark}\rm \label{rm:det_and_random}
If $\lambda_{1,n}$ and $\widetilde\lambda_{1,n}$ are  bounded
from below uniformly in $n$ by some non-zero constant $\widetilde \beta$, then from the bound \eqref{eq. number eigenv trivial bound 1} for
$\mathcal N_{k,n}$ we have that equation \eqref{ass limsup 1} follows.

The weak spectral gap assumption \eqref{ass limsup 1} is related to the Buser-Sarnak example \cite{BS94} which proves that Benjamini-Schramm convergence
of the sequence of compact surfaces $\{X_n\}$ of genus $g_n$ does not necessarily imply the existence of a spectral gap.
Specifically, Buser and Sarnak constructed a Benjamini-Schramm convergent sequence with the first eigenvalues $\lambda_{1,n}$ bounded
by a constant times $\log g_n/g_n$.  This upper bound is not sufficient to deduce that the weak spectral gap fails to hold true for such a sequence.

If one is interested in a probabilistic convergence result
for a sequence of ``typical'' compact Riemann surfaces in the large volume regime, a uniform lower $\widetilde \beta$ exists with probability
one in each of the following three models of random compact Riemann surfaces: In the Weil-Petersson model, by the work of Mirzakhani \cite{Mi13}
and later improved in \cite{WX22, LW24, AM25, AM26}; in the Brooks–Makover model, by \cite[Theorem 2.2. (a)]{BM04}; and in the
random cover model by \cite{MNP22}.  The interested reader is referred to the survey article \cite{MN26} for an overview of results
related to the spectral gap conjecture.

Our aim is to prove a deterministic result, hence we find it worthwhile to significantly weaken the standard spectral gap assumption,
by which we mean the assumption that the sequence of the first eigenvalues is uniformly bounded from below.  Additionally,
we note that that $\lambda_{1,n}$ satisfies the inequalities
$$
\frac{h(X_n)^2}{4}\leq \lambda_{1,n}\leq 2h(X_n)(h(X_n)+5)
$$
where $h(X_n)$ is the Cheeger constant (see \cite{Ch70} and \cite{Bu82}).  Given this, one could
state an alternative to \eqref{ass limsup 1} in the form that
$$
\liminf h(X_n)^2 {\rm vol}(X_n) >0
\,\,\,\,\,
\text{\rm as $g_n\to\infty$.}
$$
However, we find this formation less appealing than \eqref{ass limsup 1}.
\end{remark}

\begin{remark}\rm\label{rm:unif disc}
The uniform discreteness assumption \eqref{ass limsup 2} is equivalent to the assertion that
$$
{\rm sys}(X_n)\geq \log \eta >0
\,\,\,\,\,
\text{\rm for all $n$.}
$$
According to Corollary 4.3 of \cite{Mi13}, in the Weil-Petersson model it is expected that
$1/{\rm sys}(X_n)$ is close to $1$ as $g_n\to\infty$.  In other words, the ``typical'' hyperbolic Riemann surface is expected
to fulfill the uniform discreteness assumption. In the  Brooks–Makover model, the uniform
lower bound for the systole exists with probability one; see \cite[Theorem 2.2. (c)]{BM04}.

Moreover, \eqref{ass limsup 2} is related to spectral density of the eigenvalues of the Laplacian
through the work of  Le Masson and Sahlsten \cite{LMS17} and Monk \cite{Mo22}.  From these works,  it is proved
that by assuming \eqref{ass limsup 2} for a sequence $\{X_n\}$ of compact surfaces converging in
Benjamini-Schramm sense to $\mathbb H$, one can conclude that the spectral density (normalized by volume) of the Laplacian on
$X_n$ converges to the spectral measure of the Laplacian on $\mathbb H$.  As proved in \cite{GK24},
Benjamini–Schramm convergence without the uniform discreteness assumption does not imply spectral convergence.
\end{remark}

Following \cite{naud2023determinants} we will set the following definition.  The sequence $\{X_n\}_{n\geq 1}$ is said to satisfy the {\bf hypothesis
$\mathcal{H}(C,L,\kappa)$} for some $C,L>0$ and $0<\kappa<1$, all of which are independent of $n$, if
$$
N_{X_n}(L):=|\{P\in\Gamma_n: N(P)\leq L\}| \leq C{\rm vol}(X_n)^{\kappa},
$$
where $|A|$ denotes the cardinality of the finite set $A$.  One can view this assumption as a type of non-accumulation of geodesics
of a bounded length.

We state and prove now the main theorem of this section.

\begin{theorem} \label{th: main}Let $\{X_n\}$ be a sequence of Riemann surfaces of finite volume satisfying the
weak spectral gap \eqref{ass limsup 1} and uniform discreteness \eqref{ass limsup 2} assumptions.
Let $\{\chi_n\}$ be an associated sequence of $m\times m$ unitary multiplier systems of  weights $2k_n\in(0,2)$, and assume that
$\lim\limits_{n\to \infty} k_n =\alpha\in[0,1)$.
If $\alpha =0$, we further assume that the multiplicities $m(\lambda_{0,n})$ of the first eigenvalue of the Laplacian $\Delta_{2k_n}$
on $X_{n}$ are bounded uniformly in $n$.  Assume there is a universal constant $C$ and
for all $\varepsilon>0$ there exists an $L_\varepsilon >0$, of order $\log L_{\varepsilon} = O(1/\varepsilon)$ and will be determined
precisely below, such that $\mathcal{H}(C,L_\varepsilon,\kappa)$ holds.  Then, with all this, we have for $n$ sufficiently large
\begin{equation} \label{eq. main statement}
\bigg| \frac{2\pi \log\mathrm{det}(\Delta_{2k_n})}{m{\rm Vol}(X_{n})}  - C_{\alpha}\bigg| <\varepsilon,
\end{equation}
where
\begin{align}\label{eq:alpha_constant}\nonumber
C_{\alpha} &=
2\zeta'(-1) +\log\sqrt{2\pi}-\frac{1}{4}+\left(\frac{1}{2}+\alpha\right)
\log\Gamma\left(1+\alpha\right) \\&+ \left(\frac{1}{2}-\alpha\right) \log \Gamma\left(1-\alpha\right)
 - \log G\left(2+\alpha\right)-\log G\left(2-\alpha\right).
\end{align}
\end{theorem}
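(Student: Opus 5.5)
Our approach starts from the determinant formula \eqref{e:det_Delta} and passes to the limit $s\to 1$. Since $\lambda_{0,n}=k_n(1-k_n)$ is the bottom eigenvalue and $k_n\in(0,1)$, we have $\lambda_{0,n}>0$ when $k_n>0$. Hence $\det(\Delta_{2k_n})=\det(\Delta_{2k_n}-s(1-s))|_{s=1}$, with no zero eigenvalue to remove. The formula of \cite[Theorem 3]{Gong95} extends by continuity (or meromorphic continuation) to $s=1$, provided $|k_n|-1\notin\mathbb Z_{\ge 0}$, which holds. This gives
$$\log\det\Delta_{2k_n}=\log Z(1;\chi_n)+\log Z_I(1)+\tilde c.$$
Evaluating \eqref{e:ZI_def} at $s=1$ with $k=k_n$, using $\omega(\mathcal F)=\mathrm{vol}(X_n)$, and adding \eqref{eq. c tilde} yields
$$\frac{2\pi}{m\,\mathrm{vol}(X_n)}\big(\log Z_I(1)+\tilde c\big)=\log(2\pi)-\log\sqrt{2\pi}-\tfrac14+2\zeta'(-1)+\Big(\tfrac12+k_n\Big)\log\Gamma(1+k_n)+\cdots,$$
which is exactly $C_{k_n}$. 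Since $C_\alpha$ is continuous in $\alpha\in[0,1)$, this converges to $C_\alpha$. I expect the constant bookkeeping, for instance $s\log 2\pi$ against $\log\sqrt{2\pi}$, to reconcile with \eqref{eq:alpha_constant}; I would check it but not dwell on it. The whole problem therefore reduces to showing
$$\log Z(1;\chi_n)/\mathrm{vol}(X_n)\to 0.$$
When $\alpha=0$ the limit must be handled with care, because $\lambda_{0,n}\to 0$. The factor $\log\lambda_{0,n}$, weighted by its multiplicity, contributes $O(m(\lambda_{0,n})\log(1/k_n))$. Since $k_n\ge 1/(2m(g_n-1))$ by admissibility and the multiplicities are bounded by hypothesis, this is $o(\mathrm{vol})$.

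To control $\log Z(1;\chi_n)$, write it for $\Re s>1$ as
$$\log Z(s;\chi)=-\int_{1}^\infty x^{-s}(\log x)^{-1}\,d\Psi(x,\chi)\;+\;(\text{small correction from } \ell\ge 1 \text{ and non-primitive terms}),$$
which yields the Stieltjes representation \eqref{eq. logZ at 1}. Split the integral at $x=L_\varepsilon$. On $[\eta,L_\varepsilon]$, uniform discreteness means the integrand starts at $\eta$, and $|\mathrm{Tr}\chi|\le m$. Hypothesis $\mathcal H(C,L_\varepsilon,\kappa)$ then bounds this piece by $O(m\,C\,\mathrm{vol}(X_n)^\kappa\log L_\varepsilon/\log\eta)$, which is $o(\mathrm{vol}(X_n))$. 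On $[L_\varepsilon,\infty)$, insert Theorem \ref{th. PGT2}: $d\Psi=\sum_{\lambda_j\le1/4}m(\lambda_j)x^{s_j-1}dx+d\mathcal G$. Integrating the error by parts against $x^{-s}/\log x$ and using $|\mathcal G|\le \mathcal C_1 mg x^{5/6}+\cdots$ gives a bound of order $mg\,L_\varepsilon^{-1/6}$ at $s=1$. The logarithmic term vanishes for large $x$, since $\mathrm{sys}(X_n)\ge\log\eta$. Choosing $L_\varepsilon$ with $L_\varepsilon^{-1/6}\asymp\varepsilon$, that is $\log L_\varepsilon=O(\log(1/\varepsilon))\subset O(1/\varepsilon)$, makes this at most $\varepsilon\,\mathrm{vol}(X_n)$.

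The main obstacle is the small-eigenvalue main terms. Each $\lambda_j\le 1/4$ contributes
$$m(\lambda_j)\int_{L_\varepsilon}^\infty x^{s_j-1-s}(\log x)^{-1}dx,$$
which, after continuation to $s=1$, behaves like $m(\lambda_j)\,|\log(1-s_j)|+O(1)$. There are two ingredients. The trivial eigenvalue of $\Delta_0$ does not appear here, because only the $\chi_n$ trace formula enters. For $j\ge1$ we have $1-s_j\gg\lambda_{j,n}\ge\lambda_{1,n}$, so the total is at most $\mathcal N_{k,n}(\log(1/\lambda_{1,n})+O(1))$. The weak spectral gap gives $\mathcal N_{k,n}\le(\beta+1)\lambda_{1,n}\mathrm{vol}(X_n)$, and $\lambda\log(1/\lambda)$ is bounded, but I need $o(\mathrm{vol})$, not merely $O(\mathrm{vol})$. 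My first attempt is to use the trivial bound $\mathcal N_{k,n}\le \mathbf N_k(1/4)=O(g_n)$ from \eqref{eq. number eigenv trivial bound 1} together with the cutoff at $L_\varepsilon$, since the integral over $[L_\varepsilon,\infty)$ decays like $L_\varepsilon^{-(1-s_j)}$ when $1-s_j$ is not tiny. Eigenvalues with $1-s_j\le\delta$ are then handled by $\mathcal N\lambda_1\log(1/\lambda_1)\lesssim\beta\,\mathrm{vol}\cdot\delta\log(1/\delta)$, which can be made $<\varepsilon\,\mathrm{vol}$. The $j=0$ eigenvalue is treated as discussed above for $\alpha=0$, and is harmless when $\alpha>0$. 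This balancing of the cutoff $L_\varepsilon$ against the threshold $\delta$ is where I expect the real difficulty. It is also where the requirement $\log L_\varepsilon=O(1/\varepsilon)$ should come from, since one needs roughly $L_\varepsilon^{-\delta}\le\varepsilon$ with $\delta\sim\varepsilon$.
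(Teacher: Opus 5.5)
Your overall architecture matches the paper's. Gong's identity at $s=1$ reduces the claim to $\log Z(1;\chi_n)=o(\mathrm{vol}(X_n))$, and your constant bookkeeping is right, since $\log 2\pi-\log\sqrt{2\pi}=\log\sqrt{2\pi}$. You then write $\log Z$ as a Stieltjes integral against $\Psi(x,\chi_n)$, split at $L_\varepsilon$, use $\mathcal H(C,L_\varepsilon,\kappa)$ on $[\eta,L_\varepsilon]$, and use Theorem~\ref{th. PGT2} on the tail, where $\mathcal G$ contributes $O(mg_nL_\varepsilon^{-1/6})$. There is no ``small correction'' to handle: $\log Z(s;\chi)=-\sum_P \mathrm{Tr}(\chi(P))\Lambda(P)N(P)^{-s}/\log N(P)$ holds exactly, because $\Lambda(P)$ already absorbs the sum over $\ell$. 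For $\alpha=0$, your bound $m(\lambda_{0,n})\log(1/\lambda_{0,n})=O(\log g_n)$ is a valid alternative to the paper's use of $\lambda_{0,n}\mathrm{vol}(X_n)\ge\pi/m$.

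The real divergence is at the small-eigenvalue main terms, and there the difficulty you anticipate is self-inflicted. Bounding each term by $m(\lambda_j)(|\log(1-s_j)|+O(1))$ throws away all dependence on $L_\varepsilon$. The paper instead uses
\[
\int_{L}^{\infty}\frac{x^{s_j-2}}{\log x}\,dx\le\frac{L^{s_j-1}}{(1-s_j)\log L}\le\frac{1}{\lambda_j\log L}.
\]
So the total is at most $(\log L_\varepsilon)^{-1}\big(m(\lambda_{0,n})/\lambda_{0,n}+\mathcal N_{k,n}/\lambda_{1,n}\big)$. After dividing by $\mathrm{vol}(X_n)$, the second piece is bounded by the weak spectral gap, which controls exactly $\mathcal N_{k,n}/(\lambda_{1,n}\mathrm{vol}(X_n))$. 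The first piece is bounded by the $\lambda_{0,n}$ estimate. The smallness therefore comes entirely from the factor $1/\log L_\varepsilon$, which is where $\log L_\varepsilon\asymp 1/\varepsilon$ comes from, and no threshold $\delta$ is needed.

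Your $\delta$-splitting does close. The first group is $\ll g_nL_\varepsilon^{-\delta}/(\delta\log L_\varepsilon)$ by the Weyl-type bound, and the second is $\ll(\beta+1)\mathrm{vol}(X_n)\,\delta\log(1/\delta)$ since $\lambda_{1,n}\le\delta$ there. But it needs $\delta\asymp\varepsilon/\log(1/\varepsilon)$, which forces $\log L_\varepsilon\asymp\log^2(1/\varepsilon)/\varepsilon$ rather than the $O(1/\varepsilon)$ you assert. Even your own heuristic $L_\varepsilon^{-\delta}\le\varepsilon$ with $\delta\sim\varepsilon$ gives $\log L_\varepsilon\sim\log(1/\varepsilon)/\varepsilon$.

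Since $\mathcal H(C,L,\kappa)$ becomes more restrictive as $L$ grows, your route as written proves the theorem only under a somewhat stronger non-accumulation hypothesis than the one stated. Replacing the logarithmic bound by the $1/(\lambda_j\log L)$ bound removes this discrepancy.
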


\begin{remark}
Before we proceed with the proof, let us note that \eqref{eq. main statement} with $\alpha=0$ and $m=1$ agrees with the deterministic result of Naud \cite[Theorem 1.1.]{naud2023determinants}; see also \cite{HW25} for a more general probabilistic convergence result in the  Weil-Petersson model. Namely, in
both papers we have, in their models of convergence, the limiting value
\begin{equation}\label{eq:old_results}
\lim\limits_{n\rightarrow \infty}\frac{2\pi \log\mathrm{det}(\Delta_{0, X_n})}{{\rm Vol}(X_{n})}=
2\zeta'(-1) +\log\sqrt{2\pi}-\frac{1}{4}.
\end{equation}
Because $\Gamma(1)=G(2)=1$, then \eqref{eq:old_results} is obtained from \eqref{eq. main statement} when $\alpha=0$.
\end{remark}

\begin{proof}
Our starting point is \eqref{e:det_Delta} with $\chi=\chi_n$.  Fix a real number $s>1$ such that $k-s\notin \mathbb{Z}_{\geq 0}$,
which means that the right-hand side of \eqref{e:det_Delta} is non-zero.  Then
$$
\log\mathrm{det}(\Delta_{2k_n}-s(1-s))=\log Z(s;\chi_n)+\log Z_I(s;\chi_n)+ \tilde c.
$$
If $k_n\in(0,1)$, then the smallest eigenvalue of $\Delta_{2k_n}$ is positive; see \cite{Hejhal83}, page 370 with $k_n=m/2$.
As such, we have that $Z(1;\chi_n)\neq 0$. Moreover, $Z_I(1;\chi_n)$, is well defined and nonzero. Therefore
\begin{align*}
\frac{2\pi \log\mathrm{det}(\Delta_{2k_n})}{m{\rm Vol}(X_{n})}&= \lim_{s\to 1^+}\frac{2\pi\log Z(s;\chi_n)}{m{\rm Vol}(X_{n})}  \\ &+\log\sqrt{2\pi} - \frac{1}{4}+2\zeta'(-1)+ \left(\frac{1}{2}+k_n\right) \log\Gamma\left(1+k_n\right) \\
 &+ \left(\frac{1}{2}-k_n\right) \log \Gamma\left(1-k_n\right)- \log G\left(2+k_n\right)-\log G\left(2-k_n\right).
\end{align*}
Given that
\begin{align*}
\lim_{n\to \infty} &\left(\left(\frac{1}{2}+k_n\right) \log\Gamma\left(1+k_n\right) + \left(\frac{1}{2}-k_n\right) \log \Gamma\left(1-k_n\right)- \log G\left(2+k_n\right)-\log G\left(2-k_n\right)\right)
\\& =\left(\frac{1}{2}+\alpha\right) \log\Gamma\left(1+\alpha\right)+ \left(\frac{1}{2}-\alpha\right) \log \Gamma\left(1-\alpha\right)
 - \log G\left(2+\alpha\right)-\log G\left(2-\alpha\right),
\end{align*}
the difference between the two quantities can be made smaller than $\varepsilon/2$, for $n$ large enough. Therefore, in order to prove
\eqref{eq. main statement}, it suffices to show there exists $L_\varepsilon >0$ such that, assuming  $\mathcal{H}(C,L_\varepsilon,\kappa)$ for some $C>0$,
then
\begin{equation}\label{eq. eq. to prove}
\left|\lim_{s\to 1^+}\frac{2\pi\log Z(s;\chi_n)}{m{\rm Vol}(X_{n})} \right| <\frac{\varepsilon}2
\end{equation}
for $n$ sufficiently large.

For any real number $s>1$ such that $k-s\notin\mathbb{Z}_{\geq 0}$, we have from pp. 496--497 of \cite{Hejhal83} that
$$
\log Z(s;\chi_n)= -\sum_{P\neq \mathrm{Id}}\frac{\mathrm{Tr}(\chi_n(P))\Lambda_1(P)}{N(P)^s},
$$
where the sum is taken over \emph{all} hyperbolic classes $P$, not just primitive classes.  As usual, $N(P)$ denotes the norm of $P$ and
$$
\Lambda_1(P)= \frac{\Lambda(P)}{\log N(P)}=\frac{\log N(P_0)}{\log N(P)(1-N(P)^{-1})}
$$
where $P_0$ is the unique primitive hyperbolic element such that $P=P_0^\ell$ for some integer $\ell\geq 1$.
To continue, let us write $\log Z(s;\chi_n)$ as the Stieltjes integral involving $\Psi(x,\chi_n)$.
Specifically, for any sufficiently large and fixed $M$, we can write that
\begin{equation}\label{eq. int up to M}
\sum_{P\neq \mathrm{Id}; N(P)\leq M}\frac{\mathrm{Tr}(\chi_n(P))\Lambda_1(P)}{N(P)^s}
= \int\limits_{\delta_n-\delta}^{M}\frac{d\Psi(t,\chi_n)}{t^s \log t},
\end{equation}
where $\delta>0$ is such that $\eta-\delta>1$ in the notation of \eqref{ass limsup 2}.  To be precise,
\eqref{eq. int up to M} follows from the fact that $\Psi$ is of bounded variation on every finite interval and $t^{-s}\log t$ is continuous.
 Recall from Theorem \ref{th. PGT2} that for any $x>x_{0}$ we have that
\begin{equation}\label{eq. bound prime geod}
  \Psi(x,\chi_n)= \sum_{\lambda_{j,n}\leq 1/4}m(\lambda_{j,n})\frac{x^{s_{j,n}}}{s_{j,n}} + \mathcal G (x;X_n,\chi_n),
\end{equation}
where $\lambda_{j,n}=s_{j,n}(1-s_{j,n})$ and
\begin{equation}\label{eq. error pntbound}
| \mathcal G (x;X_n,\chi_n))|\leq \mathcal C_1mg_n x^{5/6} + \mathcal C_2 mg_n x^{2/3} \max\left\{ 0, -\log\log\eta-\frac{1}{6}\log x\right\}.
\end{equation}
When $k_n\in(0,1)$ we have that $s_{0,n}<1$.  Using this, together with \eqref{eq. bound prime geod} and \eqref{eq. error pntbound}
and after integrating by parts and letting $M\to\infty$,  we get that
$$
\log Z(s;\chi_n)= -\lim_{M\to\infty} \int\limits_{\delta_n-\delta}^{M}\frac{\Psi(t,\chi_n)(s\log t+1)}{t^{s+1}\log^2 t}dt,
$$
so then
\begin{equation}\label{eq. logZ at 1}
\lim_{s\to 1^+}\log Z(s;\chi_n)=-\int\limits_{\delta_n-\delta}^{\infty}\frac{\Psi(t,\chi_n)(\log t+1)}{t^{2}\log^2 t}dt.
\end{equation}

We now study the asymptotic behavior of \eqref{eq. logZ at 1} in $n$.
Consider $L \geq x_{0}$. From \eqref{eq. bound prime geod},  when combined with the bound \eqref{eq. error pntbound}, we have that
\begin{align*}
\left|\int\limits_{L}^{\infty}\frac{\Psi(t,\chi_n)(\log t+1)}{t^{2}\log^2 t}dt\right|&\leq \frac{2}{\log L}\bigg[ \sum_{\lambda_{j,n}\leq 1/4}m(\lambda_{j,n})\frac{L^{s_{j,n}-1}}{\lambda_{j,n}} dt + mg_n \left( 6C_1L^{-\frac{1}{6}} + 3|\log\log \eta|C_2 L^{-\frac{1}{3}} \right)\bigg] .
\end{align*}
Therefore
\begin{align}\label{eq. large L bound}\nonumber
\frac{1}{{\rm vol}(X_n)}\left|\int\limits_{L}^{\infty}\frac{\Psi(t,\chi_n)(\log t+1)}{t^{2}\log^2 t}dt\right|&\leq \frac{2L^{-\lambda_{0,n}}}{\log L}  \bigg[\frac{m(\lambda_{0,n})}{\lambda_{0,n}{\rm vol}(X_n)} + \frac{\mathcal N_{k,n}}{\lambda_{1,n}{\rm vol}(X_n)}\bigg] \\&+ \frac{1}{L^{\frac{1}{6}}\log L} \frac{C(m,\eta)g_n}{{\rm vol}(X_n)},
\end{align}
where we denote by $C(m,\eta)$ an absolute constant which depends only on $m$ and $\eta$.
We can write $\lambda_{0,n}{\rm vol}(X_n)= 4\pi k_n(1-k_n)(g_n-1)$.  Because the weight $k_n$ belongs to the set $\displaystyle \frac{\mathbb Z}{2m(g_n-1)}$,
there is a positive integer $a_{n}$ such that
$k_n=a_n/(2m(g_n-1))$. Hence, we have that
\begin{equation}\label{eq. lambda vol}
\lambda_{0,n}{\rm vol}(X_n)=\frac{2\pi a_n}{m}\left(1-\frac{a_n}{2m(g_n-1)}\right)\geq \frac{2\pi }{m}\left(1-\frac{a_n}{2m(g_n-1)}\right).
\end{equation}
Given that $\lim_{n\to\infty} k_n =\alpha\in[0,1)$, we consider two distinct possibilities. First, if $\alpha>0$, then $\lambda_{0,n}>C_1(\alpha)>0$
for some constant depending only upon $\alpha$ and for all $n\in\mathbb N$. Therefore, using the bound
\eqref{eq. number eigenv trivial bound 1}, we deduce that
$$
\frac{m(\lambda_{0,n})}{\lambda_{0,n}{\rm vol}(X_n)}\leq C(\alpha,\eta),
$$
for some constant depending only upon $\alpha$ and $\eta$.
\noindent
Second, if $\alpha=0$, then \eqref{eq. lambda vol} implies that  $\lambda_{0,n}{\rm vol}(X_n) \geq \frac{\pi}{m}$
for all large enough $n$. If we denote by $M$ the uniform upper bound for the multiplicities $m(\lambda_{0,n})$ we have in this case that
$$
\frac{m(\lambda_{0,n})}{\lambda_{0,n}{\rm vol}(X_n)}\leq \frac{mM}{\pi}.
$$
Therefore, in both cases, the quantity $m(\lambda_{0,n})/(\lambda_{0,n}{\rm vol}(X_n))$ is bounded from above by
some absolute constant $C(m,M,\alpha,\eta)$.  This will show that the upper bound in \eqref{eq. large L bound} goes to zero as $L$ tends to
infinity, as we now discuss.

Fix any $\varepsilon >0$. In view of the weak spectral gap assumption \eqref{ass limsup 1},
we can choose  $L_\varepsilon =L(\varepsilon)$ large enough so that
\begin{equation}\label{eq:L_epsilon_size1}
\frac{2(C(m,M,\alpha,\eta)+\beta)}{\log L_\varepsilon} +  \frac{1}{L_\varepsilon^{\frac{1}{6}}\log L_\varepsilon} \frac{C(m,\eta)g_n}{{\rm vol}(X_n)} <\frac{\varepsilon}{16\pi }.
\end{equation}
The estimate for $L_{\varepsilon}$ given in the statement of the theorem is determined by \eqref{eq:L_epsilon_size1}.
Then \eqref{eq. large L bound} becomes
\begin{equation}\label{eq. bound large L}
\frac{1}{m(2g_n-2)}\left|\int\limits_{L_\varepsilon}^{\infty}\frac{\Psi(t,\chi_n)(\log t+1)}{t^{2}\log^2 t}dt\right|<\frac{\varepsilon}{4}.
\end{equation}

To finish, we now consider the integral portion of \eqref{eq. logZ at 1} for $t < L_{\varepsilon}$.
Assume $\mathcal{H}(C,L_\varepsilon,\kappa)$ holds for some $C>0$. Since $|{\rm Tr}(\chi_n)|\leq m$, we have
\begin{align} \nonumber
\left|\int\limits_{\delta_n-\delta}^{L_\varepsilon}\frac{\Psi(t,\chi_n)(\log t+1)}{t^{2}\log^2 t}dt\right|&\leq m \int\limits_{\delta_n-\delta}^{L_\varepsilon}\frac{ (\log t+1)\sum\limits_{P\in\Gamma_n;\, N(P)\leq t} \Lambda(P)}{t^{2}\log^2 t}dt\\
&\leq m \left(\sum\limits_{P\in\Gamma_n;\,N(P)\leq L_\varepsilon} \Lambda(P)\right) \int\limits_{\eta-\delta}^{L_\varepsilon}\frac{ (\log t+1)}{t^{2}\log^2 t}dt\label{eq. small geod bound}
\\&\leq Bm \sum\limits_{P\in\Gamma_n;\,N(P)\leq L_\varepsilon} \Lambda(P)\nonumber
\end{align}
where
$$
B=\int\limits_{\eta-\delta}^{\infty}\frac{ (\log t+1)}{t^{2}\log^2 t}dt.
$$
Under hypothesis $\mathcal{H}(C,L_\varepsilon,\kappa)$, we get that
$$
\sum_{N(P)\leq L_\varepsilon} \Lambda(P)\leq \sum\limits_{P\in\Gamma_n;\, N(P)\leq L_\varepsilon} \log N(P_0) \leq \log L_\varepsilon N_{X_n}(L_\varepsilon)\leq C \log L_\varepsilon ({\rm vol}(X_n)^{\kappa}).
$$
When combined with \eqref{eq. small geod bound}, the bound becomes
\begin{equation}\label{eq:L_epsilon_size2}
\frac{1}{m(2g_n-2)}\left|\int\limits_{\delta_n-\delta}^{L_\varepsilon}\frac{\Psi(t,\chi_n)(\log t-1)}{t^{2}\log^2 t}dt\right|\leq \frac{2\pi C B \log L_\varepsilon}{{\rm vol}(X_n)^{1-\kappa}}.
\end{equation}
Given that $\kappa\in(0,1)$, there exists $n_1\geq n_0$ depending solely on $\varepsilon$ and such that $n\geq n_1$ implies that
$$
\frac{1}{m(2g_n-2)}\left|\int\limits_{\delta_n-\delta}^{L_\varepsilon}\frac{\Psi(t,\chi_n)(\log t-1)}{t^{2}\log^2 t}dt\right|\leq \frac{2\pi C B \log L_\varepsilon}{{\rm vol}(X_n)^{1-\kappa}}<\frac{\varepsilon}{4}.
$$
Combined with \eqref{eq. small geod bound} and the expression \eqref{eq. logZ at 1} for $\lim_{s\to1+} \log Z(s;\chi_n)$,  this proves that \eqref{eq. eq. to prove} holds true for $n\geq n_1$.

\end{proof}

\begin{remark} The size of $L_{\varepsilon}$ is determined by the inequalities
\eqref{eq:L_epsilon_size1} and \eqref{eq:L_epsilon_size2}, though in \eqref{eq:L_epsilon_size2} we have
that $L_{\varepsilon}$ was determined by \eqref{eq:L_epsilon_size1} and then we used that $\kappa < 1$.  If need
be, further refinement could occur.  For example, one could allow a sequence $\kappa_{n} = \kappa(X_{n})$ which
tend to one provided \eqref{eq:L_epsilon_size2} will be less than $\varepsilon/4$ for $n$ sufficiently large.
\end{remark}

\begin{remark}
From the proof of the above theorem, it is evident that Theorem \ref{th: main} holds
when the weak spectral gap assumption is replaced by the following assumption:

\medskip
\noindent
(iii) ({\bf Uniform boundedness of the number of small eigenvalues}) There exists a constant $\mathcal M$ such that
$$
\mathcal{N}_{k,n}\leq \mathcal M, \quad \text{ for all   } n\in\mathbb N.
$$
\medskip

\noindent
With the notation as above, and assuming the uniform boundedness of the number of small eigenvalues we have for large enough $n$ that
$$
\frac{\mathcal{N}_{k,n}}{\lambda_{1,n}{\rm vol}(X_{n})}\leq \frac{\mathcal{M}}{\lambda_{0,n}{\rm vol}(X_{n})}\leq \frac{m \mathcal M}{\pi (1-\alpha)}.
$$
\end{remark}

A particularly interesting circumstance is when $\alpha=1/2$. Such a limit can be reached in many different ways, such as when $k_n=\frac{1}{2}=\frac{m(g_n-1)}{m(2g_n-2)}$, which is an admissible value for all $n$. In that case we have
the following corollary.

\begin{corollary}
  Let $\{\chi_n\}_{n\geq 1}$ be a sequence of $m\times m$ unitary multiplier systems of weight $1$ associated to a sequence of
  compact surfaces $\{X_n=\overline{\Gamma}_n \backslash \mathbb{H}\} $ such that the minimal norm of the hyperbolic element
  from each $\Gamma_n$ is uniformly bounded from below by some $\eta>1$. Then,
  for any $\varepsilon > 0$ with assumptions on
  $L_{\varepsilon}$ and $\mathcal{H}(C,L_\varepsilon,\kappa)$ as in Theorem \ref{th: main}, for sufficiently large $n$ we have that
  $$
 \left| \frac{2\pi \log\mathrm{det}(\Delta_{1})}{m{\rm Vol}(X_{n})} +\zeta'(-1) -\frac{5}{12}\log 2+\frac{1}{4}\right|<\varepsilon.
$$
\end{corollary}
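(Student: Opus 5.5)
The plan is to obtain the corollary by specializing Theorem~\ref{th: main} to the constant weight $k_n=\tfrac12$, so that $\alpha=\tfrac12$, and then evaluating $C_{1/2}$ in closed form. The weight $k_n=\tfrac12=m(g_n-1)/(2m(g_n-1))$ lies in $A_m(\Gamma_n)$ for every $n$, so it is admissible. Since $\alpha=\tfrac12\neq 0$, the extra hypothesis on the multiplicities $m(\lambda_{0,n})$ does not arise. Uniform discreteness and $\mathcal H(C,L_\varepsilon,\kappa)$ are assumed in the corollary. What remains to check is the weak spectral gap \eqref{ass limsup 1}, which the corollary does not assume.

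\textbf{Step 1: the weak spectral gap holds automatically.} For $k=\tfrac12$ we have $\lambda_{0,n}=k_n(1-k_n)=\tfrac14$, so every eigenvalue satisfies $\lambda_{j,n}\ge \tfrac14$. Hence $\lambda_{1,n}\ge \tfrac14$, and $\mathcal N_{k,n}$ is exactly the multiplicity of the eigenvalue $\tfrac14$.
\begin{itemize}
\item I would bound $\mathcal N_{k,n}$ by $O(m g_n)$. The eigenspace for $\lambda=\tfrac14$ at weight $1$ corresponds to $V$-valued holomorphic forms of weight $1$, whose dimension is $O(mg_n)$ by Riemann--Roch.
\item Alternatively, the Weyl bound \eqref{eq. number eigenv trivial bound 1} with $\lambda=\tfrac14$ gives $\mathcal N_{k,n}\le 2mB(g_n-1)/{\rm sys}(X_n)\le 2mB(g_n-1)/\log\eta$, using Remark~\ref{rm:unif disc}.
\end{itemize}
Either way, $\mathcal N_{k,n}/(\lambda_{1,n}{\rm vol}(X_n))$ is bounded uniformly in $n$, so \eqref{ass limsup 1} holds with some finite $\beta$. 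I expect this verification to be the only step needing care. The cleaner route is the second one, since it uses only results already stated.

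I would also observe the following for reassurance. In the proof of Theorem~\ref{th: main}, the small-eigenvalue contribution in \eqref{eq. large L bound} enters through $L^{s_{j,n}-1}=L^{-1/2}$, because $s_{j,n}=\tfrac12$. It is therefore harmless, and $Z(1;\chi_n)\neq0$ since $k_n\in(0,1)$.

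\textbf{Step 2: evaluate $C_{1/2}$.} From \eqref{eq:alpha_constant}, the term $(\tfrac12-\alpha)\log\Gamma(1-\alpha)$ vanishes at $\alpha=\tfrac12$. Using \eqref{eq. funct we. Barnes}, namely $G(5/2)=\Gamma(3/2)G(3/2)$, the term $\log\Gamma(3/2)$ cancels, and
$$C_{1/2}=2\zeta'(-1)+\tfrac12\log(2\pi)-\tfrac14-2\log G(3/2).$$
Next I would insert the classical value $G(\tfrac12)=2^{1/24}e^{1/8}\pi^{-1/4}A^{-3/2}$, where $\log A=\tfrac1{12}-\zeta'(-1)$ is the Glaisher constant, together with $G(3/2)=\Gamma(\tfrac12)G(\tfrac12)$. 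This gives
$$\log G(3/2)=\tfrac14\log\pi+\tfrac1{24}\log2+\tfrac32\zeta'(-1).$$
Substituting, $C_{1/2}=-\zeta'(-1)+\tfrac5{12}\log 2-\tfrac14$.

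Finally, \eqref{eq. main statement} with this value of $C_{1/2}$ is exactly the inequality stated in the corollary.
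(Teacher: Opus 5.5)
Your proposal is correct and follows the paper's route: it specializes Theorem~\ref{th: main} to $\alpha=\tfrac12$, uses $G(5/2)=\Gamma(3/2)G(3/2)$ together with the closed form of $G(3/2)=\sqrt{\pi}\,G(1/2)$, and your value $C_{1/2}=-\zeta'(-1)+\tfrac{5}{12}\log 2-\tfrac14$ checks out. Your Step~1 is a genuine improvement: the corollary does not assume the weak spectral gap and the paper's one-line proof never checks it, so your verification ($\lambda_{1,n}\ge\tfrac14$ because $\lambda_{0,n}=\tfrac14$, plus the bound \eqref{eq. number eigenv trivial bound 1} with ${\rm sys}(X_n)\ge\log\eta$) supplies the missing hypothesis.
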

\begin{proof}
  The proof follows from \eqref{eq. main statement} with $\alpha=1/2$, combined with the functional equation \eqref{eq. funct we. Barnes} for the Barnes double gamma function and the explicit evaluation
  $$
  G\left(\frac{3}{2}\right)=\sqrt{\pi} G\left(\frac{1}{2}\right)=2^{\frac{1}{24}}e^{\frac{1}{8}}\pi^{\frac{1}{4}}A^{-\frac{3}{2}},
  $$
  where $A$ is the Glaisher's constant, which is that
  $$
  A=e^C, \quad \text{where}\quad C=\frac{1}{12}-\zeta'(-1);
  $$
  see \cite[Section 2]{AD14} and \cite{Barnes899}
\end{proof}

Another interesting situation arises when $2k_n=\frac{p}{q}\in(0,2)$. Such a value is admissible on any surface of genus $g_n$ for which
$g_n=\ell_n q+1$, for some positive integer $\ell_n$. We have the following corollary.

\begin{corollary}
Fix a rational number $\frac{p}{q}\in(0,2)$ and let $\{X_n=\overline{\Gamma}_n \backslash \mathbb{H}\} $ be a sequence of compact surfaces of
genus $g_n=\ell_n q+1$, where $\ell_n\rightarrow \infty$ monotonically as $n\to\infty$. Assume that $\{X_n\}$ is such that the minimal norm of the hyperbolic
element from each $\Gamma_n$ is uniformly bounded from below by some $\eta>1$.   Let $\{\chi_n\}_{n\geq 1}$ be a sequence of $m\times m$
unitary multiplier systems of weight $\frac{p}{q}$ associated to $\{X_n\}$.  Then,
for any $\varepsilon > 0$ with assumptions on
$L_{\varepsilon}$ and $\mathcal{H}(C,L_\varepsilon,\kappa)$ as in Theorem \ref{th: main}, for sufficiently large $n$ we have that
$$
\bigg| \frac{2\pi \log\mathrm{det}(\Delta_{p/q})}{m{\rm Vol}(X_{n})}  - A_{p,q} \bigg| <\varepsilon.
$$
where
\begin{align*}
A_{p,q} &=
2\zeta'(-1) +\log\sqrt{2\pi}-\frac{1}{4}+\left(\frac{1}{2}+\frac{p}{2q}\right) \log\Gamma\left(1+\frac{p}{2q}\right)
\\& + \left(\frac{1}{2}-\frac{p}{2q}\right) \log \Gamma\left(1-\frac{p}{2q}\right)
 - \log G\left(2+\frac{p}{2q}\right)-\log G\left(2-\frac{p}{2q}\right).
\end{align*}
\end{corollary}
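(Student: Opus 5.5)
The corollary is a direct specialization of Theorem \ref{th: main} to the constant weight sequence $2k_n = p/q$, i.e. $k_n = \frac{p}{2q}$. So $\lim_{n\to\infty}k_n = \alpha = \frac{p}{2q}$. Since $p/q\in(0,2)$, we have $\alpha\in(0,1)$. Substituting $\alpha = p/(2q)$ into \eqref{eq:alpha_constant} gives $A_{p,q}$ verbatim. The work is therefore to check that the hypotheses of Theorem \ref{th: main} hold in this setting.

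\emph{Admissibility.} The weight must lie in $\frac{\mathbb Z}{2m(g_n-1)}$. With $g_n-1 = \ell_n q$ we get
$$
k_n=\frac{p}{2q}=\frac{p\,m\ell_n}{2m(g_n-1)},
$$
so the weight is admissible for every $n$. Moreover $g_n\to\infty$ because $\ell_n\to\infty$, hence ${\rm vol}(X_n)\to\infty$.

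\emph{Uniform discreteness.} This is exactly the assumption on the minimal norm $\eta>1$, i.e. \eqref{ass limsup 2}.

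\emph{The $\alpha=0$ multiplicity hypothesis.} It is vacuous here, since $\alpha>0$.

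\emph{Non-accumulation.} The hypothesis $\mathcal{H}(C,L_\varepsilon,\kappa)$ is assumed as in Theorem \ref{th: main}.

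\emph{Spectral gap.} This is the only hypothesis not stated explicitly, and I expect it to be the main obstacle. I would not try to verify \eqref{ass limsup 1} directly. Instead I would use the remark following the proof, which says the weak spectral gap may be replaced by uniform boundedness of the number of small eigenvalues, or rather use the mechanism behind that remark. In the proof, the weak spectral gap is used only to bound $\sum_{\lambda_{j,n}\le 1/4} m(\lambda_{j,n})/(\lambda_{j,n}{\rm vol}(X_n))$ in \eqref{eq. large L bound}. For fixed $\alpha=p/(2q)\in(0,1)$, every eigenvalue satisfies $\lambda_{j,n}\ge\lambda_{0,n}=\alpha(1-\alpha)>0$, a constant independent of $n$. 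So that sum is at most $\mathcal N_{k,n}/(\alpha(1-\alpha){\rm vol}(X_n))$. The bound \eqref{eq. number eigenv trivial bound 1} with $\lambda=1/4$ and ${\rm sys}(X_n)\ge\log\eta$ gives $\mathcal N_{k,n}\le C(m,\eta)(g_n-1)$, so this ratio is bounded by a constant depending only on $m,\eta,p,q$. This is exactly what the argument leading to \eqref{eq:L_epsilon_size1} requires, with $\beta$ replaced by that constant. It is the same reasoning the paper uses for the $\alpha>0$ case of $m(\lambda_{0,n})$.

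\emph{Conclusion.} With these checks in place, the rest of the proof of Theorem \ref{th: main} applies unchanged. This includes the reduction \eqref{eq. eq. to prove} via \eqref{e:det_Delta}, the tail bound for $t\ge L_\varepsilon$ from Theorem \ref{th. PGT2}, and the small-geodesic bound via $\mathcal H$. Since $k_n$ is constant, the convergence of the $\Gamma$- and $G$-terms is trivial. Hence for $n$ large the stated inequality with $A_{p,q}$ holds.
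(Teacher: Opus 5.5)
Your proposal is correct and follows the paper's own route: the paper gives no separate proof and obtains the corollary by specializing Theorem \ref{th: main} to the constant weight $k_n\equiv p/(2q)=\alpha\in(0,1)$, which is admissible since $k_n\cdot 2m(g_n-1)=pm\ell_n\in\mathbb Z$. Your extra observation is also valid: the weak spectral gap, which the corollary never states, holds automatically here because $\lambda_{1,n}\ge \alpha(1-\alpha)$, and the crude Weyl bound with ${\rm sys}(X_n)\ge\log\eta$ gives $\mathcal N_{k,n}=O(g_n)$, so you have correctly filled in the hypothesis the paper leaves implicit.
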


\thispagestyle{empty}
{\footnotesize
\bibliographystyle{amsalpha}
\bibliography{ref}
}

\vspace{3mm}
\noindent
Jay Jorgenson \\
 Department of Mathematics \\
 The City College of New York \\
 Convent Avenue at 138th Street \\
 New York, NY 10031 U.S.A. \\
 e-mail: jjorgenson@mindspring.com

 \vspace{3mm}
\noindent
Lejla Smajlovi\'{c} \\
 Department of Mathematics and Computer Sciences\\
 University of Sarajevo\\
 Zmaja od Bosne 35, 71 000 Sarajevo\\
 Bosnia and Herzegovina\\
 e-mail: lejlas@pmf.unsa.ba

\vspace{3mm}\noindent
Polyxeni Spilioti\\
Department of Mathematics \\
University of Patras\\
Panepistimioupoli Patron 265 04\\
Greece\\
e-mail: pspilioti@upatras.gr

\end{document}